\numberwithin{equation}{section}
\numberwithin{figure}{section}
\numberwithin{table}{section}
\long\def\MSC#1\EndMSC{\def\arg{#1}\ifx\arg\empty\relax\else
     {\narrower\noindent%
{2010 Mathematics Subject Classification}: #1\\} \fi}
\long\def\PACS#1\EndPACS{\def\arg{#1}\ifx\arg\empty\relax\else
     {\narrower\noindent%
{PACS numbers}: #1}\fi}
\long\def\KEY#1\EndKEY{\def\arg{#1}\ifx\arg\empty\relax\else
	{\narrower\noindent% 
Keywords: #1\\}\fi}
\theoremstyle{plain}
\newtheorem{theorem}{Theorem}[section]
\newtheorem{lemma}[theorem]{Lemma}
\newtheorem{proposition}[theorem]{Proposition}
\newtheorem{corollary}[theorem]{Corollary}
\theoremstyle{definition}
\newtheorem{definition}[theorem]{Definition}
\theoremstyle{remark}
\newtheorem{remark}[theorem]{Remark}
\newcommand{\norm}[1]{\lVert#1\rVert}
\newcommand{\abs}[1]{\lvert#1\rvert} 
\newcommand{\inner}[1]{\langle#1\rangle}
\newcommand{\redel}{\mathop{\textup{Re}}}
\newcommand{\mspan}{\mathop{\textup{span}}}
\newcommand{\dist}{\mathop{\textup{dist}}}
\newcommand{\ident}{\mathop{\textup{id}}}
\newcommand{\rum}[1]{\mathbb{#1}}
\newcommand{\spanm}{\mathop{\textup{span}}}
\newcommand{\brum}{\mathbb{B}}
\newcommand{\as}{\hat{a}}
\newcommand{\ia}{\mathcal{I}_a}
\newcommand{\T}{\mathcal{T}}
\newcommand{\D}{\mathcal{D}}
\newcommand{\ga}{{g_a}}
\newcommand{\Ga}{{G_a}}
\newcommand{\Ka}{{K_a}}
\newcommand{\transp}[1]{{#1}^{\textup{T}}}
\newcommand{\diag}{\mathop{\textup{diag}}}
\newcommand{\Ccinf}{C_{\textup{c}}^\infty}
\newcommand{\I}{\mathrm{i}}    % imaginary unit
\newcommand{\e}{\mathrm{e}}    % Euler's number
\newcommand{\di}{\mathrm{d}}   % differential
\newcommand{\R}{\mathbb{R}}
\begin{document}

\title[Optimal distinguishability bounds for EIT]{Optimal depth-dependent distinguishability bounds for electrical impedance tomography in arbitrary dimension}

\author[H.~Garde]{Henrik Garde}
\address[H.~Garde]{Department of Mathematical Sciences, Aalborg University, Skjernvej 4A, 9220 Aalborg, Denmark.}
\email{henrik@math.aau.dk}

\author[N.~Hyv\"onen]{Nuutti Hyv\"onen}
\address[N.~Hyv\"onen]{Department of Mathematics and Systems Analysis, Aalto University, P.O. Box~11100, 02150 Espoo, Finland.}
\email{nuutti.hyvonen@aalto.fi}

\begin{abstract}
  The inverse problem of electrical impedance tomography is severely ill-posed. In particular, the resolution of images produced by impedance tomography deteriorates as the distance from the measurement boundary increases. Such depth dependence can be quantified by the concept of distinguishability of inclusions. This paper considers the distinguishability of perfectly conducting ball inclusions inside a unit ball domain, extending and improving known two-dimensional results to an arbitrary dimension $d\geq2$ with the help of Kelvin transformations. The obtained depth-dependent distinguishability bounds are also proven to be optimal. 
\end{abstract}

\maketitle

\KEY
electrical impedance tomography, 
Kelvin transformation,
depth dependence,
distinguishability.
\EndKEY

\MSC
35P15,
35R30,
47A30. 
\EndMSC

\section{Introduction}

The inverse conductivity problem in {\em Electrical Impedance Tomography} (EIT) is known to be highly ill-posed, and under reasonable assumptions it only allows conditional log-type stability estimates~\cite{Alessandrini1988,Mandache2001}. Such general estimates are uniform over the examined domain, although Lipschitz stability has been observed at the measurement boundary~\cite{Alessandrini1988,Alessandrini2001,Alessandrini2009,Brown2001a,Sylvester1988,Kang2002,Nakamura2001a,Nakamura2001,Nakamura2003}. This suggests the stability of the inverse conductivity problem is actually  depth-dependent, which is also reflected in the quality of numerical reconstructions. Using the notion of \emph{distinguishability}, some results characterizing this depth dependence have recently been obtained in two dimensions~\cite{Alessandrini_2017,Garde_2017}. Motivated by the inherent three-dimensionality of EIT, this work extends and improves the depth-dependent distinguishability bounds for perfectly conducting inclusions presented in \cite{Garde_2017} to an arbitrary spatial dimension $d\geq 2$. Although we focus here solely on the nonlinear inverse conductivity problem, it should be acknowledged that there also exist previous works tackling depth-dependent sensitivity for its linearized version~\cite{Ammari_2013,Nagayasu_2009}.

The main tool in our analysis is the Kelvin transformation~\cite{Kelvin} that takes the role played by M\"obius transformations in~\cite{Garde_2017}. The Kelvin transformation is a traditional tool in,~e.g.,~potential theory for problems in unbounded domains, and it is typically defined using the inversion in the unit sphere \cite{Axler_2001,Bogdan_2006,Michalik_2012,Wermer_1974}. However, we need to consider inversions with respect to arbitrary spheres in our analysis, leading to the employment of translated and dilated versions of the classic Kelvin transformation (cf.~\cite{Armitage_2001,Hanke_2011}). A central property of all Kelvin transformations, relating them to EIT and also explaining their use in potential theory, is that a function is harmonic if and only if its Kelvin transformation is harmonic. Compared to the use of M\"obius transformations for $d=2$ in~\cite{Garde_2017}, a complication related to Kelvin transformations for $d > 2$ is their habit to mix Neumann and Robin boundary conditions. This currently restricts our analysis in higher dimensions to perfectly conducting inclusions, characterized by a homogeneous Dirichlet condition on the inclusion boundary.

Let $\brum$ denote the Euclidean unit ball in $\rum{R}^d$ for any integer $d\geq 2$. We consider the setting where a single \emph{perfectly conducting} concentric ball $B(0,r)$ of radius $0<r<1$ is mapped by an inversion, which leaves $\brum$ invariant, onto a nonconcentric ball $B(C,R) \subset \brum$ centered at $C \in \brum$. It turns out that for any ball $B(C,R)\subset \brum$ there exists a unique inversion, that relates it in this manner, to a concentric ball $B(0,r)$ with $r = r(\abs{C},R) \in(0,1)$. Alternatively, one can treat $C$ and $R$ as functions of $r\in(0,1)$ and a vector $a\in\brum\setminus\{0\}$ parametrizing all inversions mapping $\brum$ onto itself. To be more precise, $a \in B(C,R)$ is the image of the origin under the considered inversion, and thus $\rho := \abs{a}\in(0,1)$ can be interpreted as a parameter controlling the `nonconcentricity' or `depth' of $B(C,R)$. Indeed, a~small $\rho$ corresponds to an almost concentric inclusion, whereas $\rho$ close to $1$ indicates that $B(C,R)$ lies close to $\partial\brum$.

Let $\Lambda_{0,r}$ and $\Lambda_{C,R}$ denote the {\em Dirichlet-to-Neumann} (DN) maps on $\partial\brum$ when a perfectly conducting inclusion is placed on $B(0,r)$ and on $B(C,R)$, respectively. In both cases, the remainder of $\brum$ is characterized by unit conductivity, i.e.~by the Laplace equation. Furthermore, let $\Lambda_1$ be the DN map on $\partial \brum$ for the inclusion-free problem with unit conductivity. It is well known that  $\Lambda_{0,r}-\Lambda_1$ and $\Lambda_{C,R}-\Lambda_1$ are smoothening \cite{Lee1989} and belong in particular to $\mathscr{L}(L^2(\partial\brum))$, the space of bounded linear operators on $L^2(\partial\brum)$. The distinguishability of the inclusion $B(C,R)$ is defined to be $\norm{\Lambda_{C,R}-\Lambda_1}_{\mathscr{L}(L^2(\partial\brum))}$ (see,~e.g.,~\cite{Alessandrini_2017,Cheney1992,Garde_2017,Isaacson1986}), and it can be motivated as follows: Assume $\Lambda$ is a DN map on $\partial \brum$ corrupted by an additive self-adjoint noise perturbation $E\in \mathscr{L}(L^2(\partial\brum))$. If the size of this perturbation, $\norm{E}_{\mathscr{L}(L^2(\partial\brum))}$, is larger than the distinguishability of $B(C,R)$ and without further information on the structure of $E$, it is impossible to determine if the datum in hand corresponds to $B(C,R)$ embedded in $\brum$ or to an inclusion-free $\brum$.

The main result of this paper (Theorem~\ref{thm:L2bnds}) relates the distinguishabilities of $B(0,r)$ and $B(C,R)$ as follows:
\begin{equation}
	\frac{1-\rho}{1+\rho} \leq \frac{\norm{\Lambda_{0,r}-\Lambda_1}_{\mathscr{L}(L^2(\partial\brum))}}{\norm{\Lambda_{C,R}-\Lambda_1}_{\mathscr{L}(L^2(\partial\brum))}} \leq \frac{1-\rho^2}{1+\rho^2}. \label{eq:intro}
\end{equation}   
Observe that the ratio of the operator norms in \eqref{eq:intro} can be interpreted as a function of $r \in (0,1)$ even if $\rho = \abs{a}$ is given; fixing $\rho$ determines the employed Kelvin transformation, or inversion, up to a rotation of~$\brum$, but the sizes of the considered inclusions $B(0,r)$ and $B(C(a,r), R(\rho,r))$ are still controlled by $r \in (0,1)$.  The estimates in \eqref{eq:intro} are optimal in the sense that the supremum and the infimum of the norm ratio over $r\in(0,1)$ exactly give the upper and lower bounds in \eqref{eq:intro}, respectively. To be more precise, the lower bound is reached when $r \to 1^-$ and the upper bound when $r \to 0^+$.

Both the upper and the lower bound in \eqref{eq:intro} converge to zero as $\rho \to 1^-$, which characterizes how a perfectly conducting inclusion $B(C, R)$ becomes more distinguishable to EIT when it approaches the measurement boundary --- even though $R = R(\rho,r)$ converges to zero when $\rho \to 1^-$ for any fixed $r \in (0,1)$.  To the authors' knowledge, \eqref{eq:intro} provides the first result of this kind for $d>2$. In particular, note that the bounds in \eqref{eq:intro} are independent of the dimension $d$, as are the formulas for $C$ and $R$ as functions of $a$ and $r$. Consult Theorem~\ref{thm:balls} in Section~\ref{sec:kelvinball} for the explicit relation between $a$, $r$, $C$, and $R$. 

The Riemann mapping theorem of harmonic morphisms does not have a counterpart for $d > 2$, which partially explains the simple geometric setting of our paper, albeit the two-dimensional analysis of \cite{Garde_2017} also only considered discoidal inclusions inside the unit disk. Be that as it may, we still expect our results can shed some light on the distinguishability of inclusions in more complicated geometric setups as well.

This paper is organized as follows. In Section~\ref{sec:kelvin}, we introduce the general Kelvin transformations on distributions and remind the reader about a commutation relation between the Kelvin transformation and the Laplacian (Theorem~\ref{thm:kelvinlaplace}). Section~\ref{sec:kelvinball} focuses on the unit ball $\brum$ and gives a characterization of an inversion that maps a given ball inside $\brum$ onto a ball concentric with $\brum$ 
%(w.r.t.\ $\brum$)
while leaving $\brum$ itself invariant (Theorem~\ref{thm:balls}). The behavior of the associated Kelvin transformations on $\partial \brum$ is also investigated. In Section~\ref{sec:EIT}, we relate the DN maps corresponding to concentric and nonconcentric perfectly conducting inclusions via Kelvin transformations (Theorem~\ref{thm:DNkelvin}). Finally, Section~\ref{sec:bounds} proves our main result (Theorem~\ref{thm:L2bnds}). The paper is completed by two appendices. Appendix~\ref{sec:appA} analyzes the relation between Kelvin and M\"obius transformations in two dimensions, in order to ease the comparison of our results and techniques with those in \cite{Garde_2017}. Appendix~\ref{sec:appB} gives a representation formula for the Kelvin transformed DN map, which may be useful for numerical simulations.

\subsection{Notational remarks}

We denote by $\mathscr{L}(X,Y)$ the space of bounded linear operators between Banach spaces $X$ and $Y$, and introduce the shorthand notation $\mathscr{L}(X) := \mathscr{L}(X,X)$. 

For any scalar-valued function $g$, we denote by $g^k$, $k \in \mathbb{Z}$, the composition of the $k$'th power and~$g$. That is, $g^k(x) := g(x)^k$ for any $k \in \mathbb{Z}$ and all $x$ in the domain of $g$ for which $g(x)^k$ is defined. In particular, $g^{-1} = 1/g$ is {\em not} the inverse of $g$. The `power notation' has its usual meaning for linear operators and matrices. In particular, $A^{-1}$ is the inverse of the matrix/operator $A$.

The open Euclidean ball and sphere  with center $\mathcal{C}\in\rum{R}^d$ and radius $\mathcal{R}>0$ are denoted by $B(\mathcal{C},\mathcal{R})$ and $S(\mathcal{C},\mathcal{R}) := \partial B(\mathcal{C},\mathcal{R})$, respectively. We use the shorthand notation $\brum := B(0,1)$ for the unit ball and denote by $\di S$ the standard (unnormalized) spherical measure on $\partial\brum$.

The Euclidean norm is denoted by $\abs{ \, \cdot \, }$, and $\{e_j\}_{j=1}^d$ is the canonical basis for $\rum{R}^d$. For $a\in\rum{R}^d\setminus\{0\}$, we set $e_a := \smash{\frac{a}{\abs{a}}}\in \partial\brum$.

\section{Kelvin transformations} \label{sec:kelvin}

The {\em inversion} in the sphere $S(\mathcal{C},\mathcal{R})$, denoted $I_{\mathcal{C},\mathcal{R}} : \rum{R}^d\setminus\{\mathcal{C} \} \to \rum{R}^d\setminus\{\mathcal{C}\}$, is defined as
\begin{equation*}
I_{\mathcal{C},\mathcal{R}}(x) := \mathcal{R}^2\frac{x-\mathcal{C}}{\abs{x-\mathcal{C}}^2} + \mathcal{C} = g_{\mathcal{C},\mathcal{R}}^2(x)(x-\mathcal{C}) + \mathcal{C},
\end{equation*}
where $g_{\mathcal{C},\mathcal{R}}(x) := \mathcal{R}/\abs{x-\mathcal{C}}$.  Note that $I_{\mathcal{C},\mathcal{R}}$ maps the points in the interior of $S(\mathcal{C},\mathcal{R})$ to its exterior and vice versa. Furthermore, $I_{\mathcal{C},\mathcal{R}}(x)$ is the unique point on the half-line from $\mathcal{C}$ through $x$ satisfying
\begin{equation}
\abs{I_{\mathcal{C},\mathcal{R}}(x)-\mathcal{C}}\abs{x-\mathcal{C}} = \mathcal{R}^2 \label{eq:eqident}
\end{equation}
for any $x\not=\mathcal{C}$.
This clearly indicates the fixed points of $I_{\mathcal{C},\mathcal{R}}$ are precisely $S(\mathcal{C},\mathcal{R})$. Moreover, $I_{\mathcal{C},\mathcal{R}}\circ I_{\mathcal{C},\mathcal{R}} = \ident$, i.e.,~$I_{\mathcal{C},\mathcal{R}}$ is an involution. The inversion $I_{\mathcal{C},\mathcal{R}}$ maps spheres and hyperplanes onto spheres and hyperplanes, but not necessarily respectively and with possibly $\mathcal{C}$ removed~\cite{Armitage_2001}. In particular, as $I_{\mathcal{C},\mathcal{R}}$ clearly maps any bounded set $E \subset \rum{R}^d$ with $\dist(E,\mathcal{C})>0$ to a bounded set, it sends spheres not intersecting $\mathcal{C}$ to spheres. We use the special notation $\hat{x} := I_{0,1}(x)$, $x \in \R^d/\{0\}$, for the inversion in the unit sphere $\partial\brum = S(0,1)$.

Let $\mathcal{C} \in \R^d$, choose any open $\Omega\subseteq \rum{R}^d\setminus\{\mathcal{C}\}$, and set  $\Omega^* := I_{\mathcal{C},\mathcal{R}}(\Omega)$. The composition of a function with $I_{\mathcal{C},\mathcal{R}}$ can be interpreted as a linear operator from $L^1_{\rm loc}(\Omega)$ to $L^1_{\rm loc}(\Omega^*)$ or vice versa,~i.e.,~we define
\begin{equation}
  \label{eq:Inversion}
\mathcal{I}_{\mathcal{C},\mathcal{R}} f := f \circ I_{\mathcal{C},\mathcal{R}} 
\end{equation}
for all $f \in L^1_{\rm loc}(\Omega)$ or $f \in L^1_{\rm loc}(\Omega^*)$.
In particular,  $\mathcal{I}_{\mathcal{C},\mathcal{R}}$ is an involution. We also introduce a linear multiplication operator $G_{\mathcal{C},\mathcal{R}}$ via
\begin{equation*}
	G_{\mathcal{C},\mathcal{R}} f := g_{\mathcal{C},\mathcal{R}} f.
\end{equation*}
Since $\mathcal{C} \notin \Omega$, it is obvious that $G_{\mathcal{C},\mathcal{R}}$ maps $L^1_{\rm loc}(\Omega)$ onto itself, and its inverse $G_{\mathcal{C},\mathcal{R}}^{-1}: L^1_{\rm loc}(\Omega) \to L^1_{\rm loc}(\Omega)$ is defined as the multiplication by $g_{\mathcal{C},\mathcal{R}}^{-1}$. The same conclusions also apply on $L^1_{\rm loc}(\Omega^*)$ because $\mathcal{C} \notin \Omega^*$ as well.
A straightforward calculation leads to the identities
\begin{equation}
  \label{eq:IGcommut}
  G_{\mathcal{C},\mathcal{R}} \mathcal{I}_{\mathcal{C},\mathcal{R}} = \mathcal{I}_{\mathcal{C},\mathcal{R}} G_{\mathcal{C},\mathcal{R}}^{-1}  \qquad {\rm and} \qquad
  G_{\mathcal{C},\mathcal{R}}^{-1} \mathcal{I}_{\mathcal{C},\mathcal{R}} = \mathcal{I}_{\mathcal{C},\mathcal{R}} G_{\mathcal{C},\mathcal{R}},
  \end{equation}
which will be frequently utilized in what follows.

\begin{definition} \label{defi:kelvin}
	The general \emph{Kelvin transformation} is defined as a linear map $K_{\mathcal{C},\mathcal{R}}: L^1_{\rm loc}(\Omega) \to L^1_{\rm loc}(\Omega^*)$, or $K_{\mathcal{C},\mathcal{R}}: L^1_{\rm loc}(\Omega^*) \to L^1_{\rm loc}(\Omega)$, by setting
	\begin{equation}
	\label{eq:Kelvin}
	K_{\mathcal{C},\mathcal{R}} := G_{\mathcal{C},\mathcal{R}}^{d-2}\mathcal{I}_{\mathcal{C},\mathcal{R}} = \mathcal{I}_{\mathcal{C},\mathcal{R}}G^{2-d}_{\mathcal{C},\mathcal{R}}.
	\end{equation}
\end{definition}
\begin{remark}
	The latter equality in Definition~\ref{defi:kelvin} follows from \eqref{eq:IGcommut} and demonstrates that $K_{\mathcal{C},\mathcal{R}}$ is also an involution. Moreover, the Kelvin transformation obviously inherits the identities 
	\begin{equation}
	\label{eq:KGcommut}
	G_{\mathcal{C},\mathcal{R}} K_{\mathcal{C},\mathcal{R}} = K_{\mathcal{C},\mathcal{R}} G_{\mathcal{C},\mathcal{R}}^{-1}  \qquad {\rm and} \qquad
	G_{\mathcal{C},\mathcal{R}}^{-1} K_{\mathcal{C},\mathcal{R}} = K_{\mathcal{C},\mathcal{R}} G_{\mathcal{C},\mathcal{R}}
	\end{equation}
	from the associated inversion operator, cf.~\eqref{eq:IGcommut}.
\end{remark}
As preparation for the following result: for any $y\in\rum{R}^d\setminus\{0\}$, let $P_y$ be the orthogonal projection onto the line $\spanm\{y\}$, and let $Q_y \in \smash{\R^{d \times d}}$ be an arbitrary matrix whose first row is $\smash{\transp{y}/\abs{y}}$ and the remaining $d-1$ rows form an orthonormal basis for $\smash{\spanm\{y\}^\perp}$. Clearly $Q_y$ is orthogonal and satisfies $Q_y y = \abs{y}e_1$.
\begin{proposition} \label{prop:generalJac}
	The Jacobian matrix $J_{\mathcal{C},\mathcal{R}}$ of $I_{\mathcal{C},\mathcal{R}}$ on $\rum{R}^d\setminus\{\mathcal{C}\}$ is
	\begin{align}
		J_{\mathcal{C},\mathcal{R}}(x) &= g_{\mathcal{C},\mathcal{R}}^2(x)(\ident - 2P_{x-\mathcal{C}}) \label{eq:J1} \\
		&= g_{\mathcal{C},\mathcal{R}}^2(x) Q_{x-\mathcal{C}}^{-1}\diag(-1,1,1,\dots,1)Q_{x-\mathcal{C}}, \label{eq:J2}
	\end{align} 
	As direct consequences,
	\begin{enumerate}[(i)]
	\item $\det(J_{\mathcal{C},\mathcal{R}}) = -g_{\mathcal{C},\mathcal{R}}^{2d}$,
          \vspace{1mm}
		\item $\transp{J}_{\mathcal{C},\mathcal{R}} = J_{\mathcal{C},\mathcal{R}}$, $J_{\mathcal{C},\mathcal{R}}^2 = g_{\mathcal{C},\mathcal{R}}^4\ident$, and $J_{\mathcal{C},\mathcal{R}}^{-1} = g_{\mathcal{C},\mathcal{R}}^{-4}J_{\mathcal{C},\mathcal{R}}$.
	\end{enumerate}
\end{proposition}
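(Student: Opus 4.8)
The plan is to reduce the computation to the ``pure'' inversion $y \mapsto \mathcal{R}^2 y/\abs{y}^2$, since the additive constant $\mathcal{C}$ in $I_{\mathcal{C},\mathcal{R}}$ does not affect the Jacobian. Setting $y = x - \mathcal{C}$ and differentiating the $i$'th component $\mathcal{R}^2 y_i/\abs{y}^2$ with respect to $y_j$ via the quotient rule, I would obtain
\begin{equation*}
\frac{\partial}{\partial y_j}\frac{\mathcal{R}^2 y_i}{\abs{y}^2} = \frac{\mathcal{R}^2}{\abs{y}^2}\Bigl(\delta_{ij} - \frac{2 y_i y_j}{\abs{y}^2}\Bigr).
\end{equation*}
Recognizing the prefactor $\mathcal{R}^2/\abs{y}^2 = g_{\mathcal{C},\mathcal{R}}^2(x)$ and the matrix with $(i,j)$ entry $y_i y_j/\abs{y}^2$ as the projection $P_{x-\mathcal{C}} = (x-\mathcal{C})\transp{(x-\mathcal{C})}/\abs{x-\mathcal{C}}^2$, this assembles exactly into \eqref{eq:J1}.

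For \eqref{eq:J2}, I would note that $\ident - 2P_{x-\mathcal{C}}$ is the reflection in the hyperplane $\spanm\{x-\mathcal{C}\}^\perp$ and diagonalize it through $Q := Q_{x-\mathcal{C}}$. Using that $Q$ is orthogonal with $Q(x-\mathcal{C}) = \abs{x-\mathcal{C}}\,e_1$, a one-line computation gives
\begin{equation*}
Q\, P_{x-\mathcal{C}}\, \transp{Q} = \frac{\bigl(Q(x-\mathcal{C})\bigr)\transp{\bigl(Q(x-\mathcal{C})\bigr)}}{\abs{x-\mathcal{C}}^2} = e_1 \transp{e_1} = \diag(1,0,\dots,0),
\end{equation*}
so that $Q(\ident - 2P_{x-\mathcal{C}})\transp{Q} = \diag(-1,1,\dots,1)$. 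Since $\transp{Q} = Q^{-1}$, rearranging and restoring the scalar factor $g_{\mathcal{C},\mathcal{R}}^2$ yields \eqref{eq:J2}.

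The three consequences then follow formally from \eqref{eq:J1}--\eqref{eq:J2}. For (i), orthogonal conjugation preserves the determinant, so $\det(\ident - 2P_{x-\mathcal{C}}) = \det\diag(-1,1,\dots,1) = -1$, and extracting the scalar $g_{\mathcal{C},\mathcal{R}}^2$ in dimension $d$ gives $\det J_{\mathcal{C},\mathcal{R}} = -g_{\mathcal{C},\mathcal{R}}^{2d}$. For (ii), symmetry $\transp{J}_{\mathcal{C},\mathcal{R}} = J_{\mathcal{C},\mathcal{R}}$ is immediate since $P_{x-\mathcal{C}}$ is a symmetric projection; the idempotency $P_{x-\mathcal{C}}^2 = P_{x-\mathcal{C}}$ gives $(\ident - 2P_{x-\mathcal{C}})^2 = \ident - 4P_{x-\mathcal{C}} + 4P_{x-\mathcal{C}} = \ident$, whence $J_{\mathcal{C},\mathcal{R}}^2 = g_{\mathcal{C},\mathcal{R}}^4\ident$; and reading this last identity as $J_{\mathcal{C},\mathcal{R}}\bigl(g_{\mathcal{C},\mathcal{R}}^{-4}J_{\mathcal{C},\mathcal{R}}\bigr) = \ident$ identifies $J_{\mathcal{C},\mathcal{R}}^{-1} = g_{\mathcal{C},\mathcal{R}}^{-4}J_{\mathcal{C},\mathcal{R}}$.

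I do not anticipate a genuine obstacle here, as the argument is a direct differentiation followed by elementary linear algebra; the only points demanding care are bookkeeping ones. Specifically, one must confirm that the first-row convention for $Q_y$ places the reflected eigenvalue $-1$ in the $\diag(-1,1,\dots,1)$ slot corresponding to $e_1$ (rather than elsewhere), and one must consistently treat $g_{\mathcal{C},\mathcal{R}}(x)$ as a scalar evaluated at the fixed point $x$ throughout, so that its $x$-dependence plays no role in the pointwise matrix identities.
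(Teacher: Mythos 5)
Your proposal is correct and follows essentially the same route as the paper: direct differentiation identifying the Jacobian as $g_{\mathcal{C},\mathcal{R}}^2(\ident - 2P_{x-\mathcal{C}})$, followed by orthogonal conjugation via $Q_{x-\mathcal{C}}$ to obtain the diagonal form, from which the consequences are read off. The only cosmetic deviations --- translating away $\mathcal{C}$ before differentiating, and deriving (ii) from the idempotency $P_{x-\mathcal{C}}^2 = P_{x-\mathcal{C}}$ rather than from \eqref{eq:J2} --- are mathematically equivalent to the paper's steps.
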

\begin{proof}
	The expression \eqref{eq:J1} follows directly from 
	\begin{equation*}
	\frac{\partial}{\partial x_j}I_{\mathcal{C},\mathcal{R}}(x) = \mathcal{R}^2\left(\frac{1}{\abs{x-\mathcal{C}}^2}e_j - 2\frac{(x-\mathcal{C})_j}{\abs{x-\mathcal{C}}^4}(x-\mathcal{C}) \right), \qquad x\in\rum{R}^d\setminus\{\mathcal{C}\}, %\label{eq:diffICR}
	\end{equation*}
	which readily yields
	\begin{equation*}
	J_{\mathcal{C},\mathcal{R}}(x)y = g_{\mathcal{C},\mathcal{R}}^2(x)\left(y-2\frac{y\cdot(x-\mathcal{C})}{\abs{x-\mathcal{C}}^2}(x-\mathcal{C})\right) = g_{\mathcal{C},\mathcal{R}}^2(x)(\ident - 2P_{x-\mathcal{C}})y
	\end{equation*}
        for any $y\in\rum{R}^d$ and $x\in \R^d / \{\mathcal{C}\}$.

	As $Q_{x-\mathcal{C}}$ rotates the coordinate system so the direction of $x-\mathcal{C}$ corresponds to the first coordinate, we have
    \begin{equation*}
    	P_{x-C} = Q_{x-\mathcal{C}}^{-1} \diag(1,0,0,\dots,0) Q_{x-\mathcal{C}}.
    \end{equation*}
    Hence,
    \begin{equation*}
    	J_{\mathcal{C},\mathcal{R}}(x) = g_{\mathcal{C},\mathcal{R}}^2(x)(\ident - 2P_{x-\mathcal{C}}) = g_{\mathcal{C},\mathcal{R}}^2(x) Q_{x-\mathcal{C}}^{-1} \big(\ident -  \diag(2,0,0,\dots,0) \big)Q_{x-\mathcal{C}}, \qquad x \in \R^d / \{\mathcal{C} \},
    \end{equation*}
    which proves \eqref{eq:J2}. Finally, (i) and (ii) immediately follow from \eqref{eq:J2} since $Q_{x-\mathcal{C}}$ is orthogonal.
\end{proof}

Obviously, $I_{\mathcal{C},\mathcal{R}}$ maps compact subsets of $\Omega$ to compact subsets of $\Omega^*$ and vice versa. Since $I_{\mathcal{C},\mathcal{R}}$ is smooth on $\rum{R}^d\setminus\{\mathcal{C}\}$,  it holds that $\mathcal{I}_{\mathcal{C},\mathcal{R}}(\Ccinf(\Omega)) \subseteq \Ccinf(\Omega^*)$ and $\mathcal{I}_{\mathcal{C},\mathcal{R}}(\Ccinf(\Omega^*)) \subseteq \Ccinf(\Omega)$. By applying $\mathcal{I}_{\mathcal{C},\mathcal{R}}$ to these inclusions, the reversed ones $\Ccinf(\Omega) \subseteq \mathcal{I}_{\mathcal{C},\mathcal{R}}(\Ccinf(\Omega^*))$ and $\Ccinf(\Omega^*)\subseteq \mathcal{I}_{\mathcal{C},\mathcal{R}}(\Ccinf(\Omega))$ follow. Since $g_{\mathcal{C},\mathcal{R}}$ is smooth and positive on $\rum{R}^d\setminus\{\mathcal{C}\}$, all these inclusions also hold when $\mathcal{I}_{\mathcal{C},\mathcal{R}}$ is replaced by $G_{\mathcal{C},\mathcal{R}}^k \mathcal{I}_{\mathcal{C},\mathcal{R}}$ for any $k \in \mathbb{Z}$. Altogether, we have thus established
\begin{equation*}
	G_{\mathcal{C},\mathcal{R}}^k \mathcal{I}_{\mathcal{C},\mathcal{R}}(\Ccinf(\Omega)) = \Ccinf(\Omega^*) = K_{\mathcal{C},\mathcal{R}}(\Ccinf(\Omega)), \qquad k \in \mathbb{Z},
\end{equation*}
which naturally also holds if the roles of $\Omega$ and $\Omega^*$ are reversed.

In consequence, we may extend both $K_{\mathcal{C},\mathcal{R}}$ and $\mathcal{I}_{\mathcal{C},\mathcal{R}}$ to continuous linear maps on distributions from $\D'(\Omega)$ to $\D'(\Omega^*)$ by setting
\begin{align}
 \label{eq:kelvindist}
 \inner{K_{\mathcal{C},\mathcal{R}}u, \phi} &:= \inner{u,G_{\mathcal{C},\mathcal{R}}^4 K_{\mathcal{C},\mathcal{R}}\phi}, \\[1mm]
 \inner{\mathcal{I}_{\mathcal{C},\mathcal{R}}u, \phi} &:= \inner{u,G_{\mathcal{C},\mathcal{R}}^{2d} \mathcal{I}_{\mathcal{C},\mathcal{R}}\phi}, \label{eq:invdist}
\end{align}
for any $u \in \D'(\Omega)$, all $\phi \in \Ccinf(\Omega^*)$, and with $\inner{\cdot,\cdot}$ denoting the dual pairing of $(\D',\Ccinf)$. The definition \eqref{eq:kelvindist} coincides with that of the {\em distributional Kelvin transformation} in~\cite{Hanke_2011}, and it can be motivated as follows: for any $u \in L^1_{\rm loc}(\Omega)$ and $\phi \in \Ccinf(\Omega^*)$, we have
\begin{align}
  \label{eq:kelvindist_mot}
\int_{\Omega^*} K_{\mathcal{C},\mathcal{R}} u \, \phi \, {\rm d} x &=
\int_{\Omega^*} \mathcal{I}_{\mathcal{C},\mathcal{R}} u \, G_{\mathcal{C},\mathcal{R}}^{d-2} \phi \, \di x = \int_{\Omega} u \, (\mathcal{I}_{\mathcal{C},\mathcal{R}} G_{\mathcal{C},\mathcal{R}}^{d-2} \phi) \,\abs{\det(J_{\mathcal{C}, \mathcal{R}})} \, \di x \nonumber \\
& =  \int_{\Omega} u \, G^{2d}_{\mathcal{C},\mathcal{R}}\mathcal{I}_{\mathcal{C},\mathcal{R}} G^{d-2}_{\mathcal{C},\mathcal{R}} \phi  \, \di x
=  \int_{\Omega} u \, G_{\mathcal{C},\mathcal{R}}^{4} K_{\mathcal{C},\mathcal{R}} \phi   \, \di x,
\end{align}
where we used Proposition~\ref{prop:generalJac}(i), \eqref{eq:IGcommut}, and \eqref{eq:Kelvin}. In other words, \eqref{eq:kelvindist} coincides with the standard Kelvin transformation \eqref{eq:Kelvin} on $L^1_{\rm loc}(\Omega)$. The same conclusion also applies to \eqref{eq:invdist}. It is easy to check that $K_{\mathcal{C}, \mathcal{R}}: \D'(\Omega) \to \D'(\Omega^*)$ and $\mathcal{I}_{\mathcal{C}, \mathcal{R}}: \D'(\Omega) \to \D'(\Omega^*)$ are involutions satisfying \eqref{eq:KGcommut} and \eqref{eq:IGcommut}, respectively.

If $\Omega$ is also bounded and satisfies $\dist(\Omega,\mathcal{C})>0$, it holds 
\begin{equation*}
G_{\mathcal{C},\mathcal{R}}^k \mathcal{I}_{\mathcal{C},\mathcal{R}}(H^m(\Omega)) = H^m(\Omega^*) = K_{\mathcal{C},\mathcal{R}}(H^m(\Omega)), \qquad k \in \mathbb{Z},
\end{equation*}
for any $m\in\rum{Z}$ and also with the roles of $\Omega$ and $\Omega^*$ reversed. To see this, note first that $\Omega$ is bounded if and only if $\dist(\Omega^*,\mathcal{C})>0$. Hence, the derivatives of $I_{\mathcal{C},\mathcal{R}}$ and $g_{\mathcal{C},\mathcal{R}}$, up to an arbitrary order $\abs{m}$, are uniformly bounded by a constant (depending on $\abs{m}$) in both $\Omega$ and $\Omega^*$. Therefore, for any $k \in \mathbb{Z}$, $G_{\mathcal{C},\mathcal{R}}^k \mathcal{I}_{\mathcal{C},\mathcal{R}}(H^m(\Omega)) \subseteq H^m(\Omega^*)$ and $G_{\mathcal{C},\mathcal{R}}^k \mathcal{I}_{\mathcal{C},\mathcal{R}}(H^m(\Omega^*)) \subseteq H^m(\Omega)$. Applying $G_{\mathcal{C},\mathcal{R}}^k \mathcal{I}_{\mathcal{C},\mathcal{R}}$ to the latter inclusion yields $H^m(\Omega^*) \subseteq G_{\mathcal{C},\mathcal{R}}^k \mathcal{I}_{\mathcal{C},\mathcal{R}}(H^m(\Omega))$, which proves the claim. In fact,
\begin{equation}
  \label{eq:Kelvin_norm}
\norm{G_{\mathcal{C},\mathcal{R}}^k \mathcal{I}_{\mathcal{C},\mathcal{R}} u}_{H^m(\Omega^*)} \leq C(\Omega,\mathcal{R},\mathcal{C},k,m) \norm{u}_{H^m(\Omega)} \qquad \text{for all } u \in H^m(\Omega),
\end{equation}
demonstrating, in particular, the boundedness of $K_{\mathcal{C},\mathcal{R}}: H^m(\Omega) \to H^m(\Omega^*)$. The same conclusion naturally applies to the inverse $K_{\mathcal{C},\mathcal{R}}: H^m(\Omega^*) \to H^m(\Omega)$ as well.

\begin{definition} \label{defi:translationAndDilation}
	The usual translation and dilation operators are defined on locally integrable functions by $\T_af(x) := f(x-a)$ and $\D_bf(x) := f(b^{-1}x)$ for $a\in\rum{R}^d$ and $b>0$.	
\end{definition}

Via a direct calculation, we see that $K_{\mathcal{C},\mathcal{R}}: L^1_{\rm loc}(\Omega) \to L^1_{\rm loc}(\Omega^*)$ satisfies
\begin{equation}
K_{\mathcal{C},\mathcal{R}} = \T_\mathcal{C} \D_\mathcal{R} K_{0,1} \D_{\mathcal{R}^{-1}} \T_{-\mathcal{C}}, \label{eq:kelvintransdilate}
\end{equation}
and thus all Kelvin transformations are dilated and translated variants of $K_{0,1}$. Take note that \eqref{eq:kelvintransdilate} also holds for the distributional Kelvin transformation, if the the dilation and translation are defined on distributions via the dual pairing in the natural manner, i.e., in the way that the extended definitions coincide with the original ones for locally integrable functions.

We now arrive at the following fundamental theorem.

\begin{theorem} \label{thm:kelvinlaplace}
	The commutation relation 
	\begin{equation*}
	\Delta K_{\mathcal{C},\mathcal{R}} = G_{\mathcal{C},\mathcal{R}}^4 K_{\mathcal{C},\mathcal{R}}\Delta
	\end{equation*}
holds on $\D'(\rum{R}^d\setminus\{\mathcal{C}\})$.
\end{theorem}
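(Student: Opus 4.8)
The plan is to prove the identity first for smooth functions and then transfer it to $\D'(\rum{R}^d\setminus\{\mathcal{C}\})$ by duality. \emph{Reduction to test functions.} For $u \in \D'(\rum{R}^d\setminus\{\mathcal{C}\})$ and $\phi \in \Ccinf(\rum{R}^d\setminus\{\mathcal{C}\})$ I would unfold both sides against $\phi$. On the left, the weak Laplacian followed by \eqref{eq:kelvindist} gives $\inner{\Delta K_{\mathcal{C},\mathcal{R}}u,\phi} = \inner{K_{\mathcal{C},\mathcal{R}}u,\Delta\phi} = \inner{u,\,G_{\mathcal{C},\mathcal{R}}^4 K_{\mathcal{C},\mathcal{R}}\Delta\phi}$. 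On the right, treating $G_{\mathcal{C},\mathcal{R}}^4$ as multiplication by the smooth positive function $g_{\mathcal{C},\mathcal{R}}^4$ and then applying \eqref{eq:kelvindist} and the weak Laplacian once more gives $\inner{G_{\mathcal{C},\mathcal{R}}^4 K_{\mathcal{C},\mathcal{R}}\Delta u,\phi} = \inner{u,\,\Delta(G_{\mathcal{C},\mathcal{R}}^4 K_{\mathcal{C},\mathcal{R}}G_{\mathcal{C},\mathcal{R}}^4\phi)}$. Since $G_{\mathcal{C},\mathcal{R}}^4 K_{\mathcal{C},\mathcal{R}}G_{\mathcal{C},\mathcal{R}}^4 = K_{\mathcal{C},\mathcal{R}}$ by \eqref{eq:KGcommut}, the distributional statement is thus equivalent to the pointwise identity $G_{\mathcal{C},\mathcal{R}}^4 K_{\mathcal{C},\mathcal{R}}\Delta\phi = \Delta K_{\mathcal{C},\mathcal{R}}\phi$ holding for every $\phi \in \Ccinf(\rum{R}^d\setminus\{\mathcal{C}\})$. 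It therefore suffices to treat smooth functions.

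\emph{Reduction to the unit sphere.} For smooth functions I would invoke \eqref{eq:kelvintransdilate} together with the elementary commutators $\Delta\T_{\mathcal{C}} = \T_{\mathcal{C}}\Delta$ and $\Delta\D_b = b^{-2}\D_b\Delta$, and the observation that, as a multiplication operator, $G_{\mathcal{C},\mathcal{R}}^4 = \T_{\mathcal{C}}\D_{\mathcal{R}}G_{0,1}^4\D_{\mathcal{R}^{-1}}\T_{-\mathcal{C}}$, which holds because $\T_{\mathcal{C}}\D_{\mathcal{R}}g_{0,1} = g_{\mathcal{C},\mathcal{R}}$. Conjugating the base identity $\Delta K_{0,1} = G_{0,1}^4 K_{0,1}\Delta$ by $\T_{\mathcal{C}}\D_{\mathcal{R}}$ and noting that the reciprocal scalar factors produced by the two dilations cancel then yields the general identity. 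This localizes everything to $\mathcal{C}=0$ and $\mathcal{R}=1$.

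\emph{The base case.} Writing $K_{0,1}\phi(x) = \abs{x}^{2-d}\phi(\hat x)$, I would differentiate directly, feeding in the structure of $J_{0,1}(x) = \abs{x}^{-2}(\ident - 2P_x)$ from Proposition~\ref{prop:generalJac}: it is symmetric and satisfies $J_{0,1}^2 = \abs{x}^{-4}\ident = g_{0,1}^4\ident$. By the chain rule, the purely second-order contribution collapses through $J_{0,1}^2 = g_{0,1}^4\ident$ to exactly $g_{0,1}^4(\Delta\phi)(\hat x) = G_{0,1}^4 K_{0,1}\Delta\phi$, which is the sought right-hand side. The remaining terms must vanish. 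Since $\abs{x}^{2-d}$ is harmonic on $\rum{R}^d\setminus\{0\}$ (and constant when $d=2$), the zeroth-order term drops out, while the two first-order terms cancel by the vector identity $2J_{0,1}\nabla\abs{x}^{2-d} + \abs{x}^{2-d}(\nabla\cdot J_{0,1}) = 0$, with $\nabla\cdot J_{0,1}$ the row-wise divergence. A short computation gives $J_{0,1}\nabla\abs{x}^{2-d} = (d-2)\abs{x}^{-d-2}x$ and $\nabla\cdot J_{0,1} = -2(d-2)\abs{x}^{-4}x$, so the cancellation is exact.

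The two reductions are routine bookkeeping; the crux is the final first-order cancellation, where the harmonicity of the weight $g_{0,1}^{d-2} = \abs{x}^{2-d}$ and the precise form of $J_{0,1}$ must conspire. I expect this to be the only delicate point, as it is exactly the mechanism that forces the dimension-dependent exponent $d-2$ in Definition~\ref{defi:kelvin} to be the correct one.
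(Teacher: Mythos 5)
Your proposal is correct and follows essentially the same route as the paper: the paper likewise reduces to the base case $\Delta K_{0,1} = G_{0,1}^4 K_{0,1}\Delta$ by conjugating with $\T_{\mathcal{C}}\D_{\mathcal{R}}$ via \eqref{eq:kelvintransdilate} (with the same cancellation of the $\mathcal{R}^{\mp 2}$ factors from $\Delta\D_b = b^{-2}\D_b\Delta$) and then passes to $\D'(\rum{R}^d\setminus\{\mathcal{C}\})$ by exactly your duality computation using \eqref{eq:kelvindist} and $G_{\mathcal{C},\mathcal{R}}^4 K_{\mathcal{C},\mathcal{R}}G_{\mathcal{C},\mathcal{R}}^4 = K_{\mathcal{C},\mathcal{R}}$ from \eqref{eq:KGcommut}. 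The only difference is that the paper cites the base case from \cite[Theorem 1.6.3]{Armitage_2001} and \cite[Proposition 4.6]{Axler_2001}, whereas you prove it directly --- and your computation is correct: $J_{0,1}\nabla\abs{x}^{2-d} = (d-2)\abs{x}^{-d-2}x$, the row-wise divergence $\nabla\cdot J_{0,1} = -2(d-2)\abs{x}^{-4}x$, the harmonicity of $\abs{x}^{2-d}$ away from the origin, and the collapse of the second-order term via $J_{0,1}^{\rm T}J_{0,1} = J_{0,1}^2 = g_{0,1}^4\ident$ all check out.
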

\begin{proof}
 	The result is well known to hold for $K_{0,1}$ on $C^2(\rum{R}^d\setminus\{0\})$ (see.,~e.g.,~\cite[Theorem 1.6.3]{Armitage_2001} or \cite[Proposition 4.6]{Axler_2001}), that is,
	\begin{equation*}
	\Delta K_{0,1}u = G_{0,1}^4 K_{0,1}\Delta u
	\end{equation*}
	for all $u\in C^2(\rum{R}^d\setminus\{0\})$.
	Employing \eqref{eq:kelvintransdilate} and the identity $G_{\mathcal{C},\mathcal{R}}\T_\mathcal{C}\D_\mathcal{R} = \T_\mathcal{C}\D_\mathcal{R} G_{0,1}$, we obtain
	\begin{align*}
	\Delta K_{\mathcal{C},\mathcal{R}} &= \Delta\T_\mathcal{C} \D_\mathcal{R} K_{0,1} \D_{\mathcal{R}^{-1}} \T_{-\mathcal{C}} \\
	&= \mathcal{R}^{-2}\T_\mathcal{C} \D_\mathcal{R} \Delta K_{0,1} \D_{\mathcal{R}^{-1}} \T_{-\mathcal{C}} \\
	&= \mathcal{R}^{-2}\T_\mathcal{C} \D_\mathcal{R} G_{0,1}^4 K_{0,1} \Delta \D_{\mathcal{R}^{-1}} \T_{-\mathcal{C}} \\
	&= G_{\mathcal{C},\mathcal{R}}^4\T_\mathcal{C} \D_\mathcal{R} K_{0,1} \D_{\mathcal{R}^{-1}} \T_{-\mathcal{C}} \Delta \\
	&= G_{\mathcal{C},\mathcal{R}}^4 K_{\mathcal{C},\mathcal{R}}\Delta
	\end{align*}
	on $C^2(\rum{R}^d\setminus\{\mathcal{C}\})$. To finalize the proof, let $u\in \D'(\rum{R}^d\setminus\{\mathcal{C}\})$ and $\phi\in \Ccinf(\rum{R}^d\setminus\{\mathcal{C}\})$ be arbitrary. We apply the definition of distributional differentiation and \eqref{eq:kelvindist} to deduce
	\begin{align*}
	\inner{\Delta K_{\mathcal{C},\mathcal{R}}u,\phi} &= \inner{u,G_{\mathcal{C},\mathcal{R}}^4K_{\mathcal{C},\mathcal{R}}\Delta\phi} \\
	&= \inner{u,\Delta K_{\mathcal{C},\mathcal{R}}\phi} \\
	&= \inner{\Delta u,G_{\mathcal{C},\mathcal{R}}^4 K_{\mathcal{C},\mathcal{R}}G_{\mathcal{C},\mathcal{R}}^4 \phi} \\
	&= \inner{G_{\mathcal{C},\mathcal{R}}^4 K_{\mathcal{C},\mathcal{R}}\Delta u,\phi},
	\end{align*}
    where we also used \eqref{eq:KGcommut} in the third equality.
\end{proof}

\subsection{Kelvin transformations on the unit ball} \label{sec:kelvinball}
From now on, we restrict our attention to the unit ball,~i.e.~choose $\Omega=\brum$, and concentrate on such inversions that also $\Omega^* = \brum$. However, it would be straightforward to generalize our results to a ball of any radius. We will employ the symbols $C,R$, instead of $\mathcal{C},\mathcal{R}$, to annotate a ball $B(C,R)$ embedded in $\brum$. 

Let $a\in\brum\setminus\{0\}$ and write $a = \rho e_a$ with $\rho = \abs{a}$ and $e_a\in\partial\brum$. Recall the notation $\as = I_{0,1}(a)$, notice that $\abs{\as} = \smash{\rho^{-1}} > 1$, and define $b := \smash{(\rho^{-2}-1)^{1/2}} = \smash{\rho^{-1}(1-\rho^2)^{1/2}}$. We introduce a special Kelvin transformation on $\smash{L^1_{\rm loc}(\brum)}$ by setting $\Ka := K_{\as, b}$; similarly, we also define $\ga := g_{\as,b}$, $\Ga := G_{\hat{a},b}$, $I_a := I_{\as,b}$, and $\ia := \mathcal{I}_{\hat{a},b}$. More explicitly, 
\begin{equation*}
\Ka f(x) = G_a^{d-2}\ia f(x) = \left( \frac{b}{\abs{x-\as}} \right)^{d-2}\!\!f\left( b^2\frac{x-\as}{\abs{x-\as}^2} + \as \right), \qquad f \in L^1_{\rm loc}(\brum),
\end{equation*}
and this definition naturally extends to $\mathcal{D}'(\brum)$ through \eqref{eq:kelvindist}.

The following theorem shows that $I_a$ leaves $\brum$ invariant for any $a \in \brum\setminus\{0\}$, and it also gives a characterization of how any nonconcentric ball inside $\brum$ can be mapped to a concentric one by~$I_a$ with a suitable $a$. It is worth noting that the formulas \eqref{eq:CR} and \eqref{eq:ar} below generalize the two-dimensional result in \cite[Proposition~2.1]{Garde_2017}; see Appendix~\ref{sec:appA}. Moreover, an equivalent characterization for three spatial dimensions can be found in~\cite{Hanke_2011}.

\begin{theorem} \label{thm:balls}
Assume $a\in\brum\setminus\{0\}$. The inversion $I_a$ leaves $\overline{\brum}$ invariant, that is, $I_a(\brum) = \brum$ and $I_a(\partial\brum) = \partial\brum$. In particular,
\begin{equation*}
	I_a|_{\partial\brum}(x) = (\ident - 2P_{x-\as})x, \qquad x \in \partial\brum,
\end{equation*}
with $P_y$ denoting the orthogonal projection onto the line spanned by $y\in\rum{R}^d\setminus\{0\}$.

The following two items completely characterize how a concentric ball embedded in $\brum$ is deformed under a given $I_a$, as well as which $I_a$ maps a given ball embedded in $\brum$ to a concentric one:  
	\begin{enumerate}[(i)]
		\item Let $r\in(0,1)$ and $a = \rho e_a$ with $\rho\in(0,1)$ and $e_a\in\partial\brum$. Then $I_a(B(0,r)) = B(C,R)$ and $I_a(S(0,r)) = S(C,R)$ with
		\begin{equation}
		C = \frac{\rho(r^2-1)}{\rho^2r^2-1}e_a, \qquad R = \frac{r(\rho^2-1)}{\rho^2r^2-1}. \label{eq:CR}
		\end{equation} 	
		\item Let $R\in(0,1-c)$ and $C = ce_a$ with $c\in(0,1)$ and $e_a\in\partial\brum$. Then $I_a(B(C,R)) = B(0,r)$ and $I_a(S(C,R)) = S(0,r)$ with
		\begin{equation}
		r = \frac{1+R^2-c^2-\sqrt{((1-R)^2-c^2)((1+R)^2-c^2)}}{2R}, \qquad a = \frac{C}{1-Rr}. \label{eq:ar}
		\end{equation}
	\end{enumerate}	
\end{theorem}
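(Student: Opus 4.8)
The plan is to treat the invariance claim, the boundary formula, and items (i)--(ii) in turn, exploiting throughout that $\as$ lies strictly outside $\overline{\brum}$ (indeed $\abs{\as} = \rho^{-1} > 1$), so that $I_a$ is smooth on a neighborhood of $\overline{\brum}$ and the relevant spheres never meet the inversion center.

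For the invariance $I_a(\partial\brum) = \partial\brum$ I would simply compute $\abs{I_a(x)}^2$ for $\abs{x} = 1$. Writing $\as = \rho^{-1}e_a$, $b^2 = \rho^{-2} - 1$, and expanding $I_a(x) = \as + b^2(x-\as)/\abs{x-\as}^2$, the desired identity $\abs{I_a(x)}^2 = 1$ reduces, after clearing the denominator $\abs{x-\as}^2 = 1 - 2\,\as\cdot x + \rho^{-2}$, to the single relation $b^2 = \rho^{-2}-1$, which holds by definition. Equivalently one observes that $S(\as,b)$ is orthogonal to $\partial\brum$ because $\abs{\as}^2 = 1 + b^2$, and an inversion in a sphere orthogonal to $\partial\brum$ preserves $\brum$. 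Since $I_a$ is an involution this yields $I_a(\partial\brum) = \partial\brum$, and as $I_a$ is continuous with $I_a(0) = \rho^2\as = a \in \brum$, the bounded connected open set $I_a(\brum)$ has boundary $\partial\brum$ and meets $\brum$, forcing $I_a(\brum) = \brum$. The boundary formula is proved in the same spirit: subtracting $(\ident - 2P_{x-\as})x$ from $I_a(x)$ leaves a scalar multiple of $(x-\as)$ whose coefficient vanishes on $\partial\brum$ precisely when $b^2 = \rho^{-2}-1$.

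For item (i) I would use the rotational symmetry of $I_a$ about the axis $\rum{R}e_a$ through $0$ and $\as$. Since $S(0,r)$ does not meet $\as$, its image is a sphere $S(C,R)$, and symmetry forces $C = ce_a$ for a scalar $c$. The axis meets $S(0,r)$ at $\pm r e_a$, and $I_a$ sends these to the two points where $S(C,R)$ meets the axis, which are the endpoints of a diameter; hence $C = \tfrac12(I_a(re_a) + I_a(-r e_a))$ and $R = \tfrac12\abs{I_a(re_a) - I_a(-re_a)}$. Along the axis $I_a$ reduces to the scalar map $t \mapsto \rho^{-1} + b^2/(t - \rho^{-1})$, and evaluating at $t = \pm r$ and simplifying (keeping careful track of signs via $\rho^{-1} > 1 > r$) yields exactly \eqref{eq:CR}; the ball maps to the ball rather than to its complement because $0 \in B(0,r)$ maps to the interior point $a$.

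Finally, item (ii) inverts \eqref{eq:CR}, which suffices since $I_a$ is an involution; here $e_a = C/\abs{C}$ and $c = \abs{C}$ are immediate. Solving the $c$-free equation of \eqref{eq:CR} for $\rho^2$ gives $\rho^2 = (r-R)/\big(r(1-Rr)\big)$, whence the pleasant simplification $\rho^2 r^2 - 1 = (r^2-1)/(1-Rr)$; substituting this into the remaining equation collapses it to $\rho = c/(1-Rr)$, i.e. $a = C/(1-Rr)$. Eliminating $\rho$ between these two relations produces the quadratic $Rr^2 - (1+R^2-c^2)r + R = 0$, whose discriminant factors as $((1-R)^2 - c^2)((1+R)^2-c^2)$; since the product of its roots equals $1$, exactly one root lies in $(0,1)$, namely the minus-sign root, giving \eqref{eq:ar}. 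I expect the main care point to be the sign bookkeeping and root selection---choosing the geometrically admissible $r \in (0,1)$ and the positive branch $\rho \in (0,1)$---together with the preliminary observations that the spheres avoid $\as$, so that images are genuine spheres and balls map to balls.
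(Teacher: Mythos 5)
Your proof is correct, and in places more self-contained than the paper's. For the boundary representation and for item (i) you follow essentially the paper's route: the paper likewise verifies the formula by showing that both $I_a(x)$ and $(\ident-2P_{x-\as})x$ reduce to $\big((\abs{\as}^2-1)x+2(1-x\cdot\as)\as\big)/\abs{x-\as}^2$ for $\abs{x}=1$, and it likewise obtains $C$ and $R$ from axial symmetry together with $I_a(\pm re_a)=\frac{\rho\mp r}{1\mp \rho r}e_a$. You diverge in two ways. First, for the invariance of $\overline{\brum}$ you compute $\abs{I_a(x)}=1$ directly (equivalently, observe $\abs{\as}^2=1+b^2$, i.e.\ $S(\as,b)$ is orthogonal to $\partial\brum$), whereas the paper argues geometrically along the axis $\brum_a$, using $I_a(\pm e_a)=\mp e_a$, the symmetry about $\spanm\{a\}$, and the sphere-to-sphere mapping property; your computation is more elementary and sidesteps the implicit step that a sphere centered on the axis and containing $\pm e_a$ must be $\partial\brum$ itself. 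Second, and more substantially, you actually carry out the inversion of \eqref{eq:CR} for item (ii) --- the relation $\rho^2=(r-R)/\big(r(1-Rr)\big)$, the simplification $\rho^2r^2-1=(r^2-1)/(1-Rr)$, the quadratic $Rr^2-(1+R^2-c^2)r+R=0$ with discriminant $((1-R)^2-c^2)((1+R)^2-c^2)$ and root product $1$ --- where the paper declares this ``a straightforward but tedious calculation'' and defers to \cite{Garde_2017}. Your algebra checks out, including the selection of the minus-sign root in $(0,1)$; this buys a genuinely complete argument where the paper only sketches one.

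One small point of care: in item (i) you justify ball-to-ball (rather than ball-to-complement) ``because $0\in B(0,r)$ maps to the interior point $a$,'' which tacitly assumes $a\in B(C,R)$ --- the very fact the paper proves via the explicit estimate $\abs{a-C}=\frac{cr}{1-Rr}R<R$. In your setup the cleaner repair, which you essentially flag in your closing remark, is that $I_a(B(0,r))$ is bounded (since $\dist(B(0,r),\as)>0$) and must be one of the two components of $(\R^d\setminus\{\as\})\setminus S(C,R)$, hence the bounded one, namely $B(C,R)$; no localization of $a$ is needed. Either way the issue is cosmetic, not structural.
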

\begin{proof}
	Assuming $\abs{x}=1$, we obtain  
	\begin{equation*}
	  I_a(x) = \frac{(\abs{\as}^2-1)(x-\as)}{\abs{x-\as}^2}+\as = \frac{(\abs{\as}^2-1)x+2(1-x\cdot\as)\as}{\abs{x-\as}^2},
        \end{equation*}
        as well as
        \begin{equation*}
	(\ident -2P_{x-\as})x = x-2\frac{x\cdot(x-\as)}{\abs{x-\as}^2}(x-\as) = \frac{(\abs{\as}^2-1)x+2(1-x\cdot\as)\as}{\abs{x-\as}^2}.
	\end{equation*}
        These formulas verify the claimed representation for $I_a$ on $\partial \brum$.
	
	Let us then prove that $I_a$ maps the closure of $\brum$ onto itself. Clearly, $I_{a}$ sends the points on the line spanned by $a$ to that same line, with the exception of $\as \notin \brum $ that is mapped to infinity and is the only point of discontinuity for $I_{a}$. Let $\brum_a := \{t e_a \mid t \in(-1,1)\}$. As $I_{a}(\pm e_a) = \mp e_a$ and $I_{a}(0) = a\in\brum_a$, it follows from the continuity of $I_{a}$ that $I_{a}(\overline{\brum_a}) = \overline{\brum_a}$. Since $I_{a}$ is the inversion in the sphere $S(\hat{a},b)$,  it is symmetric about the line spanned by $a$. In particular, $I_{a}$ maps any sphere centered on the line spanned by $a$, and not intersecting $\as$, onto another sphere centered on that very same line. Hence, $I_{a}(\partial\brum) = \partial\brum$ because $I_{a}(\partial\brum)$ is known to contain $\pm e_a$. As $I_a$ is a continuous involution away from $\as \notin \brum$ and $I_a(0) \in \brum$, it must in fact hold $I_{a}(\brum) = \brum$.
	
	Because \eqref{eq:ar} in part (ii) of the assertion follows by a straightforward but tedious calculation based on \eqref{eq:CR} and $I_a$ being an involution (cf.~\cite{Garde_2017}), we only need to consider the proof of part~(i). In the same manner as above, it can be argued that $I_{a}$ maps $S(0,r)$, with $0<r<1$, onto some sphere $S(ce_a,R) \subset \brum$ and that $\overline{\brum_a}\cap S(ce_a,R)=\{I_{a}(-re_a), I_{a}(re_a)\}$, where
	\begin{align*}
	  I_{a}(re_a) &= (\rho^{-2}-1)\frac{r-\rho^{-1}}{(\rho^{-1}-r)^2}e_a + \rho^{-1} e_a = \frac{\rho-r}{1-\rho r}e_a, \\
	I_{a}(-re_a) &= -(\rho^{-2}-1)\frac{r+\rho^{-1}}{(\rho^{-1}+r)^2}e_a + \rho^{-1} e_a= \frac{\rho+r}{1+\rho r}e_a.
	\end{align*}  
	Hence, $C = ce_a = (I_{a}(-re_a) + I_{a}(re_a))/2$ and $R = \abs{I_{a}(-re_a)-C}$, giving the expressions in \eqref{eq:CR}.

    As mentioned above, the two relations between $a$, $r$, $C$, and $R$ in \eqref{eq:CR} are equivalent to those in \eqref{eq:ar}, and so we can also assume the knowledge of the latter in the following. Since $B(ce_a,R)\subseteq I_{a}(\brum) = \brum$, it must hold $0<c<1-R$. With the help of the latter equality in \eqref{eq:ar}, we thus get
	\begin{equation*}
	\abs{a-ce_a} = \abs{\rho - c} = \Big\lvert\frac{c}{1 - Rr} - c\Big\rvert = \frac{cr}{1-Rr}R < \frac{1-R}{1-Rr}R < R
	\end{equation*}
    as $0<r<1$. In other words, $I_a(0) = a \in B(ce_a,R)$. As $I_a$ is a continuous involution, it thus maps the whole of $B(0,r)$ onto $B(ce_a,R)$ and the proof is complete.   
\end{proof}

As expected, the Jacobian matrix of $I_a$ is denoted $J_a := J_{\as,b}$, with $J_{\as,b}$ explicitly given in Proposition~\ref{prop:generalJac}. The following corollary provides information about the behavior of $J_a$ on $\partial \brum$, enabling substitutions corresponding to $I_a$ in integrals over $\partial \brum$. In particular, it enables the introduction of the distributional Kelvin transformation on $\D'(\partial \brum)$, in the sense of distributions on a smooth manifold (cf.~e.g.~\cite[Chapter~6.3]{HormanderI}).
\begin{corollary} \label{coro:Jbdry}
	For $x\in\partial\brum$, $J_a(x)x = g_a^2(x)I_a(x)$ and $J_a(x)I_a(x) = g_a^2(x)x$.
\end{corollary}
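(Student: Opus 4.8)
The plan is to combine the explicit form of the Jacobian from Proposition~\ref{prop:generalJac} with the boundary representation of $I_a$ from Theorem~\ref{thm:balls}. Specializing \eqref{eq:J1} to $\mathcal{C} = \as$ and $\mathcal{R} = b$ gives $J_a(x) = g_a^2(x)(\ident - 2P_{x-\as})$ for every $x \in \rum{R}^d \setminus \{\as\}$, and in particular for $x \in \partial\brum$. Theorem~\ref{thm:balls} supplies the companion identity $I_a(x) = (\ident - 2P_{x-\as})x$ on $\partial\brum$. The first claimed equality is then immediate: applying $J_a(x)$ to $x$ yields $J_a(x)x = g_a^2(x)(\ident - 2P_{x-\as})x = g_a^2(x)I_a(x)$.

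For the second equality, first I would introduce the shorthand $M := \ident - 2P_{x-\as}$ for the reflection appearing in both formulas, so that $J_a(x) = g_a^2(x)M$ and $I_a(x) = Mx$ for $x \in \partial\brum$. The key observation is that $M$ is an involution: since $P_{x-\as}$ is an orthogonal projection we have $P_{x-\as}^2 = P_{x-\as}$, whence $M^2 = \ident - 4P_{x-\as} + 4P_{x-\as}^2 = \ident$. Consequently $M I_a(x) = M(Mx) = M^2 x = x$, and therefore $J_a(x)I_a(x) = g_a^2(x) M I_a(x) = g_a^2(x)x$, as desired.

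There is no real obstacle here; the corollary is a direct consequence of the two ingredients already in hand, and the only point requiring a moment's care is to treat $M$ (and hence $J_a(x)$) as a fixed matrix attached to the base point $x$, so that left-multiplying $I_a(x) = Mx$ by $M$ legitimately collapses via $M^2 = \ident$ --- as opposed to reinterpreting $J_a(x)I_a(x)$ as the Jacobian evaluated at the image point $I_a(x)$. The scalar factor $g_a^2(x)$ plays no active role beyond factoring through the matrix multiplication. One could alternatively phrase the second identity using $J_a^2 = g_a^4\ident$ from Proposition~\ref{prop:generalJac}(ii), but the reflection argument above is shorter and self-contained.
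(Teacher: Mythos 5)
Your proof is correct and follows essentially the same route as the paper: the first identity is obtained exactly as there, by matching the factor $\ident - 2P_{x-\as}$ appearing in both Proposition~\ref{prop:generalJac} and Theorem~\ref{thm:balls}. For the second identity the paper left-multiplies the first one by $J_a^{-1}(x) = g_a^{-4}(x)J_a(x)$ from Proposition~\ref{prop:generalJac}(ii), whereas you re-derive the equivalent fact $(\ident - 2P_{x-\as})^2 = \ident$ from the projection property --- a purely cosmetic difference, and your remark about keeping $J_a(x)$ anchored at the base point $x$ rather than evaluating the Jacobian at $I_a(x)$ is exactly the right point of care.
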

\begin{proof}
  The first equality follows from a combination of Proposition~\ref{prop:generalJac} and Theorem~\ref{thm:balls} that relate both $I_a(x)$ and $J_a(x)$ to $\ident-2P_{x-\as}$ when $x \in \partial \brum$. By applying the inverse of $J_a(x)$ to this first equality, one obtains
  \begin{equation*}
  	x = g_a^2(x) J_a^{-1}(x) I_a(x) = g_a^{-2}(x) J_a(x) I_a(x), \qquad x \in \partial \brum,
  \end{equation*}
  where we used Proposition~\ref{prop:generalJac}(ii) in the second step. This completes the proof.
\end{proof}

The linear maps $K_a$, $\mathcal{I}_a$, and $G_a$ can obviously also be interpreted as operators on $L^1_{\rm loc}(\partial \brum) = L^1(\partial \brum)$, and $G_a$ as such on $\D'(\partial \brum)$ as well. In the spirit of \eqref{eq:kelvindist} and \eqref{eq:invdist}, we introduce the extensions $K_a, \mathcal{I}_a:  \D'(\partial \brum) \to \D'(\partial \brum)$ via
\begin{align*} %\label{eq:bndkelvindist}
 \inner{K_{a}f, \varphi} &:= \inner{f,G_{a}^2 K_{a}\varphi}, \\[1mm]
 \inner{\mathcal{I}_{a}f, \varphi} &:= \inner{f,G_{a}^{2d-2} \mathcal{I}_{a}\varphi}, %\label{eq:bndinvdist}
\end{align*}
for any $f \in \D'(\partial \brum)$ and all $\varphi \in \Ccinf(\partial \brum)= C^\infty(\partial \brum)$. As in the case of \eqref{eq:kelvindist} and \eqref{eq:invdist}, these definitions make sense because $C^\infty(\partial \brum) =  G_{a}^{k} \mathcal{I}_{a}(C^\infty(\partial \brum))$ for any $k \in \mathbb{Z}$, and it is also easy to check that the extensions still satisfy \eqref{eq:IGcommut} and \eqref{eq:KGcommut}. 

\begin{remark}
	The extended operators coincide with the standard ones \eqref{eq:Kelvin} and \eqref{eq:Inversion} on $L^1(\partial\brum)$. Indeed, one can prove this by  performing  similar calculations as in \eqref{eq:kelvindist_mot}, with the exception that this time around the (boundary) Jacobian determinant reads 
	\begin{equation}
	\label{eq:boundary_Jaco}
	\frac{\abs{\det J_a(x)}}{\abs{J_a(x) x}} =  \frac{g_a^{2d}(x)}{\abs{g_a^2(x) I_a(x)}} = g_a^{2d-2}(x), \qquad x \in \partial \brum,
	\end{equation}
	due to Corollary~\ref{coro:Jbdry} and since $\nu(x) = x$ is the exterior unit normal at $x \in \partial \brum$. For $K_{a}$ such a calculation is actually explicitly carried out in the proof of Lemma~\ref{lemma:opnorms} below. As for the Sobolev spaces over the domain $\Omega$ in \eqref{eq:Kelvin_norm}, it follows straightforwardly  that
	\begin{equation} \label{eq:bKelvin_bound}
	\norm{G_{a}^{k} \mathcal{I}_{a} f}_{H^s(\partial \brum)} \leq C(a,k,s) \norm{f}_{H^s(\partial \brum)}
	\end{equation}
	for any $s, k \in \mathbb{Z}$. The standard theory on interpolation of Sobolev spaces demonstrates that \eqref{eq:bKelvin_bound} actually holds for any $s \in \R$; see,~e.g.,~\cite{sobolev,Lions1972}. Finally, it follows via a density argument that $K_a$ and $\mathcal{I}_a$ are involutions on $H^s(\partial \brum)$ for any $s \in \R$.
\end{remark}

We have now gathered enough tools to explicitly evaluate certain operator norms of $\Ka$ and closely related operators. 
\begin{lemma} \label{lemma:opnorms}
	The following results hold in $\mathscr{L}(L^2(\brum))$ and $\mathscr{L}(L^2(\partial\brum))$:
	\begin{enumerate}[(i)]
		\item $G_a^2 \Ka$ is an isometry in $\mathscr{L}(L^2(\brum))$ and $\Ga \Ka$ is an isometry in $\mathscr{L}(L^2(\partial\brum))$.
		\item $K_a^* = G_a^4\Ka$ in $\mathscr{L}(L^2(\brum))$ and $K_a^* = G_a^2 \Ka$ in $\mathscr{L}(L^2(\partial\brum))$.
		\item There are the following operator norm equalities:
		\begin{align*}
		\norm{\Ka}_{\mathscr{L}(L^2(\brum))} &= \norm{G_a^4\Ka}_{\mathscr{L}(L^2(\brum))} = \norm{\Ka}_{\mathscr{L}(L^2(\partial\brum))}^2 =  \norm{G_a^2\Ka}_{\mathscr{L}(L^2(\partial\brum))}^2 \\
		&= \norm{G_a^{\pm 1}}_{\mathscr{L}(L^2(\brum))}^2 = \norm{G_a^{\pm 1}}_{\mathscr{L}(L^2(\partial\brum))}^2 = \frac{1+\rho}{1-\rho}.
		\end{align*}
	\end{enumerate}
\end{lemma}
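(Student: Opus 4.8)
The plan is to handle the three items in order, reducing everything to a change-of-variables computation for (i), a ``unitary involution'' observation for (ii), and an elementary extremal computation of $\ga$ for (iii).

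\smallskip

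First I would prove (i) by direct substitution. Since $\Ka = \Ga^{d-2}\ia$, we have $\abs{\Ka f}^2 = \ga^{2d-4}\abs{f\circ I_a}^2$, so that $\abs{\Ga^2\Ka f}^2 = \ga^{2d}\abs{f\circ I_a}^2$ pointwise. Because $I_a$ is an involution with $I_a(\brum)=\brum$ (Theorem~\ref{thm:balls}) and $\abs{\det J_a} = \ga^{2d}$ (Proposition~\ref{prop:generalJac}(i)), the substitution $y = I_a(x)$ turns $\int_\brum \ga^{2d}\abs{f\circ I_a}^2\,\di x$ into $\norm{f}_{L^2(\brum)}^2$, showing $\Ga^2\Ka$ is an isometry on $L^2(\brum)$. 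The boundary statement is identical after replacing the volume Jacobian $\ga^{2d}$ by the surface Jacobian $\ga^{2d-2}$ of \eqref{eq:boundary_Jaco}: one finds $\abs{\Ga\Ka f}^2 = \ga^{2d-2}\abs{f\circ I_a}^2$, which integrates over $\partial\brum$ to $\norm{f}_{L^2(\partial\brum)}^2$, so $\Ga\Ka$ is an isometry on $L^2(\partial\brum)$.

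\smallskip

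For (ii) I would observe that the isometries from (i) are in fact \emph{unitary involutions}. The commutation relation \eqref{eq:KGcommut} gives $\Ka\Ga^2 = \Ga^{-2}\Ka$, whence $(\Ga^2\Ka)^2 = \ident$; an isometric involution is surjective, hence unitary and self-adjoint. As $\Ga^2$ is self-adjoint (multiplication by the real function $\ga^2$), self-adjointness of $\Ga^2\Ka$ reads $K_a^*\Ga^2 = \Ga^2\Ka$, i.e.\ $K_a^* = \Ga^2\Ka\Ga^{-2} = \Ga^4\Ka$, using $\Ka\Ga^{-2}=\Ga^2\Ka$ from \eqref{eq:KGcommut} once more. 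The boundary identity $K_a^* = \Ga^2\Ka$ follows verbatim from the self-adjoint unitary involution $\Ga\Ka$. (Both identities may alternatively be read off directly from the adjoint definitions \eqref{eq:kelvindist}, which were built precisely so that $\inner{\Ka f,\phi}=\inner{f,\Ga^4\Ka\phi}$ on $\brum$ and $\inner{\Ka f,\varphi}=\inner{f,\Ga^2\Ka\varphi}$ on $\partial\brum$.)

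\smallskip

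Finally, for (iii) I would combine the elementary Hilbert-space facts $\norm{T}=\norm{T^*}$ and $\norm{TU}=\norm{T}$ for unitary $U$ with (i)--(ii). The equalities $\norm{\Ka}_{\mathscr{L}(L^2(\brum))}=\norm{\Ga^4\Ka}_{\mathscr{L}(L^2(\brum))}$ and $\norm{\Ka}_{\mathscr{L}(L^2(\partial\brum))}=\norm{\Ga^2\Ka}_{\mathscr{L}(L^2(\partial\brum))}$ are instances of $\norm{T}=\norm{T^*}$ via (ii). Writing $\Ka = \Ga^{-2}(\Ga^2\Ka)$ on $\brum$ and $\Ka = \Ga^{-1}(\Ga\Ka)$ on $\partial\brum$ and cancelling the unitary factors yields $\norm{\Ka}_{\mathscr{L}(L^2(\brum))}=\norm{\Ga^{-1}}_{\mathscr{L}(L^2(\brum))}^2$ and $\norm{\Ka}_{\mathscr{L}(L^2(\partial\brum))}=\norm{\Ga^{-1}}_{\mathscr{L}(L^2(\partial\brum))}$. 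Everything thus reduces to the multiplier norms $\norm{\Ga^{\pm1}} = \norm{\ga^{\pm1}}_{L^\infty}$. Since $\ga(x)=b/\abs{x-\as}$ is continuous and $\abs{x-\as}$ attains its extremes $\rho^{-1}\mp1$ on $\partial\brum$ as $x$ ranges over $\overline{\brum}$, inserting $b=\rho^{-1}(1-\rho^2)^{1/2}$ gives $\max\ga=((1+\rho)/(1-\rho))^{1/2}$ and $\min\ga=((1-\rho)/(1+\rho))^{1/2}$ over \emph{both} $\overline{\brum}$ and $\partial\brum$. Hence $\norm{\Ga^{\pm1}}^2 = (1+\rho)/(1-\rho)$ over either domain, and feeding this back through the preceding relations produces all seven stated equalities.

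\smallskip

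I expect the only genuine subtlety to be the bookkeeping in the boundary change of variables, in particular the justification of the surface Jacobian $\ga^{2d-2}$; but this is already delivered by \eqref{eq:boundary_Jaco} together with Corollary~\ref{coro:Jbdry}, so the remaining effort is the routine extremal computation for $\ga$. The one structural point worth highlighting is that the extremes of $\ga$ over $\overline{\brum}$ are attained on $\partial\brum$, which forces $\norm{\Ga^{-1}}_{\mathscr{L}(L^2(\brum))}=\norm{\Ga^{-1}}_{\mathscr{L}(L^2(\partial\brum))}$ and is exactly what makes $\norm{\Ka}_{\mathscr{L}(L^2(\brum))}=\norm{\Ka}_{\mathscr{L}(L^2(\partial\brum))}^2$.
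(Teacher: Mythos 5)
Your proposal is correct, and it largely parallels the paper's own proof: your part (i) is the same change-of-variables computation based on $\abs{\det J_a} = g_a^{2d}$, the identities \eqref{eq:IGcommut}, and the boundary Jacobian \eqref{eq:boundary_Jaco}; your extremal computation in (iii) --- that the maximum and minimum of $\ga$ over $\overline{\brum}$ are attained at $\pm e_a \in \partial\brum$, yielding $\norm{\Ga^{\pm1}}^2 = (1+\rho)/(1-\rho)$ on both spaces --- is exactly the paper's. The one genuine divergence is part (ii): the paper computes $K_a^*$ by a second direct change-of-variables pairing computation (via \eqref{eq:bndchva} and \eqref{eq:IGcommut}), whereas you obtain it for free from (i) by observing via \eqref{eq:KGcommut} that $\Ga^2\Ka$ and $\Ga\Ka$ are isometric involutions, hence self-adjoint unitaries, so that $K_a^* = \Ga^2\Ka\Ga^{-2} = \Ga^4\Ka$ on $L^2(\brum)$ and $K_a^* = \Ga\Ka\Ga^{-1} = \Ga^2\Ka$ on $L^2(\partial\brum)$. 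This abstract route is sound (an isometry satisfying $T^2 = \ident$ is surjective, hence unitary, hence equal to its own adjoint) and it also streamlines (iii): your use of $\norm{T} = \norm{T^*}$ together with cancellation of unitary factors in $\Ka = \Ga^{-2}(\Ga^2\Ka)$ and $\Ka = \Ga^{-1}(\Ga\Ka)$ is precisely the content of the paper's appeal to ``the duality results of part~(ii)'' and its computation $\norm{\Ka f}_{L^2(\partial\brum)} = \norm{\Ga\Ka\Ga f}_{L^2(\partial\brum)} = \norm{\Ga f}_{L^2(\partial\brum)}$. In short, your variant trades one integral computation for a standard operator-theoretic observation, at no cost in rigor, while everything else matches the paper.
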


\begin{proof}
 Proof of part (i). It follows directly from \eqref{eq:Kelvin}, the change of variables formula, Proposition~\ref{prop:generalJac}(i), and \eqref{eq:IGcommut} that
 \begin{equation} \label{eq:isometry}
 	\int_{\brum} \abs{G_a^2 K_a f}^2\,\di x =  \int_{\brum} G_a^{2d} \mathcal{I}_a\abs{f}^2\,\di x = \int_{\brum} G_a^{2d} \mathcal{I}_a G_a^{2d} \mathcal{I}_a \abs{f}^2\,\di x = \int_{\brum} \abs{f}^2\,\di x
 \end{equation}
 for all $f\in L^2(\brum)$, which proves the first half of the claim.
 To prove the second half, observe that \eqref{eq:boundary_Jaco} yields
 \begin{equation} \label{eq:bndchva}
	\int_{\partial\brum} \ia g  \,\di S = \int_{\partial\brum} G_a^{2d-2} g \, \di S \qquad \text{and}  \qquad  \int_{\partial\brum} g  \,\di S = \int_{\partial\brum} G_a^{2d-2} \ia g \, \di S
 \end{equation}
 for all $g\in L^1(\partial\brum)$. As in \eqref{eq:isometry}, one thus obtains
 \begin{equation*}
   	\int_{\partial\brum} \abs{G_a K_a f}^2 \,\di S  =  \int_{\partial\brum} \abs{f}^2 \,\di S
 \end{equation*}
 for all $f\in L^2(\partial \brum)$, which completes the proof of part (i).
	
 Proof of part (ii). We only prove the result in $\mathscr{L}(L^2(\partial\brum))$ since the proof for $\mathscr{L}(L^2(\brum))$ follows from the same line of reasoning. For any $f,g\in L^2(\partial\brum)$,
 \begin{align*}
 	\int_{\partial\brum} \Ka f \, \overline{g}\,\di S &= \int_{\partial\brum} \ia f \, \overline{G_a^{d-2}g}\,\di S = \int_{\partial\brum} f\overline{G_a^{2d-2}\ia G_a^{d-2}g}\,\di S \\
	&= \int_{\partial\brum} f \, \overline{G_a^{d} \ia g}\,\di S = \int_{\partial\brum} f\overline{G_a^{2}\Ka g}\,\di S,
 \end{align*}
 where we employed \eqref{eq:bndchva} and \eqref{eq:IGcommut}.
	
 Proof of part (iii). Since $\as\not\in\brum$, it is straightforward to see that both the maximum and the minimum of $\ga$ over the compact set $\overline{\brum}$ are attained on $\partial\brum$.\footnote{For $d>2$, this follows from the maximum principle and the fact that $g_a^{d-2}$ is harmonic.} More precisely, the maximum (or the minimum) is obviously found at the point closest to (or furthest from) $\as$,~i.e.~at $e_a$ (or $-e_a$), which leads to
 \begin{align*}
	\sup_{x\in\brum}g_a^2(x)  = \frac{\rho^{-2}-1}{\abs{e_a-\as}^2} = \frac{(\rho^{-1}-1)(\rho^{-1}+1)}{(\rho^{-1}-1)^2} = \frac{1+\rho}{1-\rho}, \\[1mm]
	\inf_{x\in\brum}g_a^2(x) = \frac{\rho^{-2}-1}{\abs{e_a+\as}^2} = \frac{(\rho^{-1}-1)(\rho^{-1}+1)}{(\rho^{-1}+1)^2} = \frac{1-\rho}{1+\rho}.
 \end{align*}
 In particular, as the norm of a multiplication operator on $L^2$ is given by the essential supremum of the multiplier, we have
 \begin{equation*}
 	\norm{G_a^{\pm 1}}_{\mathscr{L}(L^2(\brum))} = \norm{G_a^{\pm 1}}_{\mathscr{L}(L^2(\partial\brum))} = \left( \frac{1+\rho}{1-\rho} \right)^{1/2}.
 \end{equation*}
 Due to the duality results of part~(ii), what remains to be shown is $\norm{\Ka}_{\mathscr{L}(L^2(\brum))} = \norm{G_a^2}_{\mathscr{L}(L^2(\brum))}$ and $\norm{\Ka}_{\mathscr{L}(L^2(\partial\brum))} = \norm{\Ga}_{\mathscr{L}(L^2(\partial\brum))}$. Again, the proofs of these identities are analogous, and we only show the latter. By virtue of part~(i) and \eqref{eq:KGcommut},
 \begin{equation*}
	\norm{\Ka f}_{L^2(\partial\brum)} = \norm{\Ga \Ka\Ga f}_{L^2(\partial\brum)} = \norm{\Ga f}_{L^2(\partial\brum)} 
 \end{equation*}
 for all $f\in L^2(\partial\brum)$, which concludes the proof.
\end{proof}

In addition to Theorem~\ref{thm:kelvinlaplace}, we must also consider the commutation of $\Ka$ and $\nabla$, as this is needed for handling Neumann boundary values. Unfortunately, the resulting expression is somewhat more complicated than that in~Theorem~\ref{thm:kelvinlaplace}. First of all, note that 
\begin{align*}
	\nabla \big(g_a^m(x) \big) &= m g_a^{m-1}(x)\nabla \ga(x) = -m g_a^m(x)\frac{x-\as}{\abs{x-\as}^2}, \qquad  m\in\rum{N}_0.
\end{align*}
Hence, for any $u \in C^\infty(\overline{\brum})$,
\begin{align}
\nabla \Ka u(x) &= g_a^{d-2}(x)\nabla\ia u(x) + \ia u(x)\nabla g_a^{d-2}(x) \notag \\[1mm]
&= J_a^{\rm T}(x)\Ka\nabla u(x) + (2-d)\frac{x-\as}{\abs{x-\as}^2}\Ka u(x), \label{eq:nablaKa}
\end{align}
where $\Ka$ is separately applied to each component of $\nabla u$. As $\as \notin \overline{\brum}$, the expression \eqref{eq:nablaKa} is valid for all $x \in \overline{\brum}$; recall that $C^\infty(\overline{\brum})$ consists of the restrictions of the elements in $C^\infty(\R^d)$ to $\overline{\brum}$.

Observe that $\nu(x) = x$ is the exterior unit normal to $\partial \brum$ for any $x\in\partial\brum$. According to \eqref{eq:nablaKa},
\begin{align}
\nu \cdot \nabla \Ka u(x) &= G_a^d\ia (\nu\cdot\nabla u)(x) + (2-d)x\cdot\frac{x-\as}{\abs{x-\as}^2}\Ka u(x) \notag\\[1mm]
&= G_a^2\Ka(\nu\cdot\nabla u)(x)+ (2-d)H_a\Ka u(x), \label{eq:kelvinnormalderiv}
\end{align}
for all $x \in \partial \brum$ and $u \in C^\infty(\overline{\brum})$. Here, the multiplication operator $H_a \in \mathscr{L}(H^s(\partial \brum))$, $s \in \R$, is defined by
\begin{equation*}
H_a f(x) := x\cdot\frac{x-\as}{\abs{x-\as}^2} f(x),
\end{equation*}
and the first equality in \eqref{eq:kelvinnormalderiv} follows from the identity
\begin{equation*}
x \cdot J_a^{\rm T}(x) K_a \nabla u(x) = g_a^{d}(x) I_a(x) \cdot \mathcal{I}_a \nabla u(x) = g_a^d(x) \mathcal{I}_a (\nu \cdot \nabla u)(x), \qquad x \in \partial \brum,
\end{equation*}
that is based on Corollary~\ref{coro:Jbdry}.

Let $U = W\setminus\partial\brum$ where $W\subseteq\overline{\brum}$ is a relatively open neighborhood of $\partial \brum$. The identity \eqref{eq:kelvinnormalderiv} extends by continuity to all $u$ in 
\begin{equation*}  %\label{eq:HDelta}
H^1_{\Delta}(U) := \{ v \in H^1(U) \mid \Delta v \in L^2(U) \}
\end{equation*}
equipped with the graph norm, that is,
\begin{equation}
  \label{eq:kelvinnormalderiv2}
( \nu \cdot \nabla \Ka u)|_{\partial \brum} = G_a^2\Ka\big((\nu\cdot\nabla u)|_{\partial \brum}\big)+ (2-d)H_a\Ka (u|_{\partial \brum}), \qquad u \in H^1_{\Delta}(U).
\end{equation}
This result follows from $C^\infty(\overline{U})$ being dense in $H^1_{\Delta}(U)$~\cite{Lions1972}, the Neumann trace extending to a bounded operator $H^1_{\Delta}(U) \to H^{-1/2}(\partial \brum)$~\cite[Lemma~1,~p.~381]{Dautray1988}, the standard trace theorem, and the boundedness of $K_a$ on  $H^{\pm 1/2}(\partial \brum)$ and $H^1_\Delta(U)$ guaranteed by \eqref{eq:bKelvin_bound}, \eqref{eq:Kelvin_norm}, and Theorem~\ref{thm:kelvinlaplace}.

The most essential message of \eqref{eq:kelvinnormalderiv2} is that a Neumann condition on $\partial\brum$ is transformed by $\Ka$ into a Robin condition. Luckily, the Robin condition transforms back to a Neumann condition for difference measurements of EIT, as revealed in the next section.

\section{Application to electrical impedance tomography} \label{sec:EIT}

Let $\Omega_{C,R} := \brum \setminus \overline{B(C,R)}$ for $C\in\brum$ and $R\in (0,1-\abs{C})$. We only consider the case of a perfectly conducting inclusion $B(C,R)$ (formally with conductivity $\infty$) and assume that  $\Omega_{C,R}$ is characterized by unit conductivity. Hence, if the electric potential on the exterior boundary is set to $f\in H^{1/2}(\partial\brum)$, then the interior potential $u \in H^1(\Omega_{C,R})$ is the unique solution to
\begin{equation}
  \label{eq:forward}
\begin{split}
\Delta u &= 0 \quad\text{in } \Omega_{C,R}, \\
u &= \begin{cases} 0 &\text{ on } \partial B(C,R), \\ f &\text{ on } \partial\brum. \end{cases}
\end{split}
\end{equation}
We define the DN map associated to the inclusion $B(C,R)$ as
\begin{equation*}
\Lambda_{C,R} : f\mapsto \nu \cdot \nabla u|_{\partial\brum}, \qquad H^{1/2}(\partial\brum)\to H^{-1/2}(\partial\brum),
\end{equation*}
and note that it is well known to be bounded due to the continuous dependence of the solution to \eqref{eq:forward} on the Dirichlet data and a suitable Neumann trace theorem (cf.,~e.g.,~\cite[Lemma~1,~p.~381]{Dautray1988}). We also define $\Lambda_1: H^{1/2}(\partial\brum)\to H^{-1/2}(\partial\brum)$ to be the DN map for the inclusion-free problem
\begin{align*}
\Delta w &= 0 \quad\text{in } \brum, \\
w &=  f \quad\text{on } \partial\brum.
\end{align*}

For each $C\in\brum\setminus\{0\}$ and $R\in (0,1-\abs{C})$, we choose the unique $a = a(C,R) \in\brum\setminus\{0\}$ such that $I_a(B(C,R)) = B(0,r)$ and $I_a(B(0,r)) = B(C,R)$ for some $R < r < 1$, which is possible by virtue of Theorem~\ref{thm:balls}. We consistently use this connection between $C,R$ and $a,r$ in what follows. The accordingly Kelvin-transformed potential $\tilde{u} := \Ka u \in H^1(\Omega_{0,r})$ is the unique solution of
\begin{equation}
  \label{eq:forward2}
\begin{split}
\Delta \tilde{u} &= 0 \quad\text{in } \Omega_{0,r}, \\
\tilde{u} &= \begin{cases} 0 &\text{ on } \partial B(0,r), \\ \tilde{f} &\text{ on } \partial\brum, \end{cases}
\end{split}
\end{equation}
for $\tilde{f} := K_a f$. Indeed, the first equation is a direct consequence of Theorem~\ref{thm:kelvinlaplace}. To prove the second one, observe that obviously
\begin{equation*}
	(K_a v)|_{\partial \Omega_{0,r}} = K_a(v|_{\partial \Omega_{C,R}})
\end{equation*}
for all $v \in C^{\infty}(\overline{\Omega_{C,R}})$, and this equality extends by density for any $v \in H^1(\Omega_{C,R})$ due to (obvious generalizations of) the estimates \eqref{eq:Kelvin_norm}, \eqref{eq:bKelvin_bound}, and the continuity of the Dirichlet trace map on both $H^1(\Omega_{C,R})$ and $H^1(\Omega_{0,r})$. Since $K_a: H^1(\Omega_{0,r}) \to H^1(\Omega_{C,R})$ is obviously the inverse of $K_a: H^1(\Omega_{C,R}) \to H^1(\Omega_{0,r})$, the solution of \eqref{eq:forward} can alternatively be written as $K_a \tilde{u}$ with $\tilde{u}$ being the solution to \eqref{eq:forward2}.

\begin{remark}
	The physically correct condition at a perfectly conducting inclusion, is that the potential equals such a constant on its boundary that the corresponding normal current density has zero mean, under the sound assumption that there are no sinks or sources inside the inclusion. Notice that this constant may depend on both $f$ and the inclusion itself. Let $\tilde{\Lambda}_{C,R}$ be the DN map corresponding to the boundary conditions of such a physically accurate setting. Since we only have a single connected inclusion,\footnote{In general, different constants appear on each connected component of a perfectly conducting inhomogeneity.} $\tilde{\Lambda}_{C,R} = \Lambda_{C,R}|_{\mathcal{Y}_{C,R}}$, where $\mathcal{Y}_{C,R}$ is a linear subspace of $H^{1/2}(\partial\brum)$, which again may depend on the inclusion. Due to this inconvenience --- in particular, for the inverse conductivity problem where the inclusion is not known a priori --- the DN operator with a larger domain $\Lambda_{C,R}$ is often investigated instead of $\tilde{\Lambda}_{C,R}$ (cf.,~e.g.,~\cite{Ammari2007a,Borman_2009,Erhard2003,Kress_2012,Kress_2011,Seo_2002}). This is also the choice in this work. 
\end{remark}
We are now ready to prove an explicit relation between the DN maps for the concentric and nonconcentric geometries.
\begin{theorem} \label{thm:DNkelvin}
 Let $B(C,R) = I_a(B(0,r))$ for $a\in\brum\setminus\{0\}$ and $r\in(0,1)$. Then,
	\begin{equation*}
	\Lambda_{C,R} = G_a^2\Ka\Lambda_{0,r}\Ka + (2-d)H_a.
	\end{equation*}
	Furthermore,
	\begin{align*}
	  \Lambda_{C,R}-\Lambda_1 &= G_a^2\Ka(\Lambda_{0,r}-\Lambda_1)\Ka,
        \end{align*}
        or equivalently,
        \begin{align*}
	\Lambda_{0,r}-\Lambda_1 &= G_a^2\Ka(\Lambda_{C,R}-\Lambda_1)\Ka.
	\end{align*}
\end{theorem}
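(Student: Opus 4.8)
The plan is to read off the first displayed identity by applying the Neumann-trace commutation relation \eqref{eq:kelvinnormalderiv2} to the Kelvin-transformed forward solution, and then to obtain the two difference formulas by noticing that the awkward $(2-d)H_a$ term is common to both the inclusion and the inclusion-free problems, so that it cancels under subtraction. First I would fix $f\in H^{1/2}(\partial\brum)$ and let $u$ solve \eqref{eq:forward} for the inclusion $B(C,R)$, so that $\Lambda_{C,R}f=(\nu\cdot\nabla u)|_{\partial\brum}$. By the discussion preceding the theorem, $u=\Ka\tilde u$ where $\tilde u$ solves the concentric problem \eqref{eq:forward2} with Dirichlet data $\tilde f=\Ka f$; in particular $\tilde u|_{\partial\brum}=\Ka f$ and $(\nu\cdot\nabla\tilde u)|_{\partial\brum}=\Lambda_{0,r}\Ka f$. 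Since $\tilde u$ is harmonic in a relatively open neighborhood of $\partial\brum$ inside $\brum$, it lies in $H^1_{\Delta}(U)$ there, so \eqref{eq:kelvinnormalderiv2} is applicable with $u$ replaced by $\tilde u$. Identifying $(\nu\cdot\nabla\Ka\tilde u)|_{\partial\brum}=\Lambda_{C,R}f$ and using that $\Ka$ is an involution in the final term gives
\[
\Lambda_{C,R}f = G_a^2\Ka\Lambda_{0,r}\Ka f + (2-d)H_a\Ka\Ka f = \bigl(G_a^2\Ka\Lambda_{0,r}\Ka + (2-d)H_a\bigr)f,
\]
which is the first identity, as $f$ was arbitrary.

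Next I would observe that the very same computation applies verbatim to the inclusion-free problem. Indeed, because $I_a(\brum)=\brum$ (Theorem~\ref{thm:balls}), the Kelvin transform $\Ka$ maps the harmonic extension of $\Ka f$ on $\brum$ to the harmonic extension of $f$ on $\brum$, by Theorem~\ref{thm:kelvinlaplace} and $\Ka^2=\ident$. Repeating the argument of the previous paragraph with $\Lambda_1$ playing the role of both $\Lambda_{C,R}$ and $\Lambda_{0,r}$ yields
\[
\Lambda_1 = G_a^2\Ka\Lambda_1\Ka + (2-d)H_a .
\]
Subtracting this from the first identity makes the common $(2-d)H_a$ terms cancel and leaves $\Lambda_{C,R}-\Lambda_1 = G_a^2\Ka(\Lambda_{0,r}-\Lambda_1)\Ka$, which is the second assertion. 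This cancellation is precisely the mechanism, foreshadowed in the text, by which the dimension-dependent Robin correction disappears for difference data.

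Finally, the equivalent third form follows by pure operator algebra. Left-multiplying the second identity by $\Ka G_a^{-2}$ and right-multiplying by $\Ka$, then using $\Ka^2=\ident$, gives $\Lambda_{0,r}-\Lambda_1 = \Ka G_a^{-2}(\Lambda_{C,R}-\Lambda_1)\Ka$; invoking the commutation identity $\Ka G_a^{-2}=G_a^2\Ka$, which follows from \eqref{eq:KGcommut}, recasts this as $\Lambda_{0,r}-\Lambda_1=G_a^2\Ka(\Lambda_{C,R}-\Lambda_1)\Ka$. The main obstacle is the bookkeeping in the first step: one must verify that the transformed solution $\tilde u$ has enough regularity near $\partial\brum$ for \eqref{eq:kelvinnormalderiv2} to apply, and correctly match the Dirichlet trace $\tilde u|_{\partial\brum}=\Ka f$ and the Neumann trace $(\nu\cdot\nabla\tilde u)|_{\partial\brum}=\Lambda_{0,r}\Ka f$ with the DN operators so that the inhomogeneous term collapses to $(2-d)H_a f$. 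Once the first identity is secured, the rest is the subtraction and the commutation calculus of \eqref{eq:KGcommut}.
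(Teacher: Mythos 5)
Your proposal is correct and follows essentially the same route as the paper's proof: applying the Neumann-trace relation \eqref{eq:kelvinnormalderiv2} to the Kelvin-transformed solution $\tilde{u}$ to get the first identity, repeating the argument with $\Lambda_1$ in both roles and subtracting so the $(2-d)H_a$ terms cancel, and then deriving the third form from the involution and commutation properties \eqref{eq:KGcommut} of $G_a^2\Ka$ and $\Ka$. Your extra checks (harmonicity of $\tilde{u}$ near $\partial\brum$ placing it in $H^1_\Delta(U)$, and the explicit computation $\Ka G_a^{-2}=G_a^2\Ka$) are sound and merely make explicit what the paper leaves implicit.
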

\begin{proof}
	Let $\tilde{g} := (\nu\cdot\nabla \tilde{u})|_{\partial\brum} \in H^{-1/2}(\partial \brum)$ be the normal current density for the solution of \eqref{eq:forward2}. We obtain directly from \eqref{eq:kelvinnormalderiv2} that
	\begin{align*}
	\Lambda_{C,R} f &= (\nu\cdot \nabla u)|_{\partial\brum} = (\nu\cdot \nabla \Ka \tilde{u})|_{\partial\brum} = G_a^2\Ka \tilde{g} + (2-d) H_a\Ka \tilde{f} \nonumber \\[1mm]
	&= G_a^2\Ka \Lambda_{0,r} \tilde{f} + (2-d) H_a\Ka \tilde{f} = \big(G_a^2\Ka\Lambda_{0,r}\Ka + (2-d)H_a \big)f
	\end{align*}
	for all $f\in H^{1/2}(\partial\brum)$. This proves the first part of the claim.

    Following exactly the same line of reasoning as above,
	\begin{equation*}
	\Lambda_1 = G_a^2\Ka\Lambda_1 \Ka + (2-d)H_a,
	\end{equation*}
        which shows the claimed representation for $\Lambda_{C,R}-\Lambda_1$. As $G_a^2 \Ka$ and $\Ka$ are involutions on $C^\infty(\partial \brum)$ (cf.~\eqref{eq:KGcommut}), they are also such on any $H^s(\partial \brum)$, $s \in \R$, by density and \eqref{eq:bKelvin_bound}. Hence, the representation for $\Lambda_{0,r}-\Lambda_1$ follows directly from that of $\Lambda_{C,R}-\Lambda_1$, and the proof is complete.
\end{proof}

Before determining the spectrum of $\Lambda_{0,r}-\Lambda_1$ needed in proving our main result, we briefly review a few facts about spherical harmonics; see,~e.g.,~\cite{Axler_2001,Efthimiou_2014,Gurarie} for additional details. Denoting $\rum{R}^d\setminus\{0\}\ni x = \eta\theta$ with $\eta = \abs{x}$ and $\theta = \frac{x}{\abs{x}}\in\partial\brum$, it is well known that the Laplace operator can be written in polar coordinates (cf.\ \cite[Section 4.5]{Gurarie}) as
\begin{equation}
\Delta = \frac{1}{\eta^{d-1}}\partial_\eta(\eta^{d-1}\partial_\eta) + \frac{1}{\eta^2}\Delta_{\partial\brum}, \label{eq:laplacepolar}
\end{equation}
where $\Delta_{\partial\brum}$ is the Laplace--Beltrami operator on $\partial\brum$ with respect to $\theta$.

A polynomial $p$ on $\rum{R}^d$ is called \emph{homogeneous of degree $n$} if $p(x) = \smash{\sum_{\abs{\alpha}=n}} c_\alpha x^\alpha$ with scalar coefficients $c_\alpha$, or equivalently $p(tx) = \smash{t^np(x)}$ for $t\in\rum{R}$; following the standard notation, $\alpha \in \smash{\mathbb{N}_0^d}$ is here a multi-index, $\abs{\alpha} := \smash{\sum_{j = 1}^{d}} \alpha_j$, and $x^\alpha := \smash{\Pi_{j=1}^d x_j^{\alpha_j}}$. The complex vector space $\mathcal{H}_{n,d}$, of \emph{spherical harmonics of degree $n$}, comprise the \emph{harmonic} polynomials homogeneous of degree $n$ restricted to $\partial\brum$,~i.e.,
\begin{equation*}
\mathcal{H}_{n,d} := \biggl\{ p|_{\partial\brum} \mid p(x) = \sum_{\abs{\alpha}=n}c_\alpha x^\alpha,\enskip x\in\rum{R}^d, \enskip \Delta p = 0 \biggr\}.
\end{equation*}
The corresponding dimension $\alpha_{n,d} := \dim(\mathcal{H}_{n,d})$ is given by
\begin{equation*}
\alpha_{n,d} = \
\binom{n+d-1}{d-1} - \binom{n+d-3}{d-1}, \\
\end{equation*}
where we use the convention $\binom{m}{k} = 0$ for $m<k$. 

The eigenvalues of $\Delta_{\partial\brum}$ are $\tilde{\lambda}_n := -n(n+d-2)$, $n\in\rum{N}_0$, with the algebraic and geometric multiplicity~$\alpha_{n,d}$. The eigenspace corresponding to the eigenvalue $\tilde{\lambda}_n$ is $\mathcal{H}_{n,d}$, spanned by the orthonormal $n$th degree spherical harmonics $\{f_{n,j}\}_{j=1}^{\alpha_{n,d}}$. The set of $f_{n,j}$ for all $n\in\rum{N}_0$ and $j\in\{1,\dots,\alpha_{n,d}\}$ is an orthonormal basis for $L^2(\partial\brum)$,~i.e.~$L^2(\partial\brum) = \bigoplus_{n=0}^\infty \mathcal{H}_{n,d}$.

Using separation of variables and classic Sturm--Liouville theory for \eqref{eq:laplacepolar}, it is known that any harmonic function $u$ on $\brum$ or $\Omega_{0,r}$ can be written as 
\begin{equation*}
u(x) = \sum_{n=0}^\infty\sum_{j = 1}^{\alpha_{n,d}} c_{n,j} R_{n}(\eta)f_{n,j}(\theta)
\end{equation*}
for $(c_{n,j})\in\ell^2$. Here $R_{n}$ is a solution of 
\begin{equation}
\eta^2 R_n''(\eta) + (d-1)\eta R_n'(\eta) + \tilde{\lambda}_n R_n(\eta) = 0, \label{eq:eulerode}
\end{equation}
with suitable boundary conditions at
%$R_n(1) = 1$
$\eta = 1$ and either at $\eta=0$ or $\eta = r$ depending on those required from $u$. If $u|_{\partial\brum} = f_{n,j}$, then $R_n(1)=1$ and $(\nu\cdot\nabla u)|_{\partial\brum} = R_n'(1)f_{n,j}$, i.e.,~$R_n'(1)$ is the $n$th eigenvalue of the associated DN map corresponding to the eigenfunction~$f_{n,j}$. In particular, the algebraic and geometric multiplicity of $R_n'(1)$ is also $\alpha_{n,d}$. Based on these observations, we can explicitly determine the eigenvalues of $\Lambda_{0,r}$, $\Lambda_1$, and $\Lambda_{0,r} - \Lambda_1$.

\begin{proposition} \label{prop:eigvals}
	The eigenvalues of $\Lambda_1$ are $\{n\}_{n\in\rum{N}_0}$ and those of $\Lambda_{0,r}$ are, for $n\in\rum{N}_0$,
	\begin{equation*}
	\hat{\lambda}_n := \begin{dcases}
	\frac{n+(n+d-2)r^{2n+d-2}}{1-r^{2n+d-2}}, & \quad d>2 \vee n\geq 1,\\
	-\frac{1}{\log(r)}, & \quad d = 2 \wedge n=0,
	\end{dcases}
	\end{equation*} 
	both with the algebraic and geometric multiplicity $\alpha_{n,d}$ and the eigenspace $\mathcal{H}_{n,d}$. 
	As a consequence, the eigenvalues of $\Lambda_{0,r}-\Lambda_1$ are
	\begin{equation*}
	\lambda_n := \begin{dcases}
	\frac{2n+d-2}{r^{2-d-2n}-1}, & \quad d>2 \vee n\geq 1,\\
	-\frac{1}{\log(r)}, & \quad d = 2 \wedge n=0,
	\end{dcases}
	\end{equation*}
    also with the algebraic and geometric multiplicity $\alpha_{n,d}$ and the eigenspace $\mathcal{H}_{n,d}$.  
\end{proposition}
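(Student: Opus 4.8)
The plan is to exploit the simultaneous diagonalization already set up in the preceding discussion: the spherical harmonics $\{f_{n,j}\}$ form an orthonormal basis of $L^2(\partial\brum)$ that is an eigenbasis for each of $\Lambda_1$, $\Lambda_{0,r}$, and hence their difference, with the $n$th DN eigenvalue equal to $R_n'(1)$ for the appropriate radial solution $R_n$ of \eqref{eq:eulerode}. Thus the entire problem reduces to solving \eqref{eq:eulerode} under the relevant boundary conditions and reading off $R_n'(1)$, with the eigenspaces $\mathcal{H}_{n,d}$ and multiplicities $\alpha_{n,d}$ inherited automatically from the spherical-harmonic decomposition. First I would observe that \eqref{eq:eulerode} is an Euler (equidimensional) equation, so substituting the ansatz $R_n(\eta)=\eta^s$ and using $\tilde\lambda_n = -n(n+d-2)$ yields the characteristic equation
\begin{equation*}
s^2 + (d-2)s - n(n+d-2) = 0.
\end{equation*}
Its discriminant is $(d-2)^2 + 4n(n+d-2) = (2n+d-2)^2$, a perfect square, so the roots are $s = n$ and $s = 2-d-n$. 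These coincide (both equal $0$) exactly when $d=2$ and $n=0$, the lone degenerate case in which the second independent solution becomes $\log\eta$ rather than a power; this case must be carried separately throughout.

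For $\Lambda_1$ on the full ball $\brum$, regularity at the origin eliminates the singular solution $\eta^{2-d-n}$ (and $\log\eta$ in the degenerate case), leaving only $R_n(\eta)=\eta^n$ after the normalization $R_n(1)=1$. Hence $R_n'(1)=n$, which gives the eigenvalues $\{n\}_{n\in\rum{N}_0}$ of $\Lambda_1$. For $\Lambda_{0,r}$ on $\Omega_{0,r}$, I would take the general solution $R_n = A\eta^n + B\eta^{2-d-n}$ and impose the two boundary conditions $R_n(r)=0$ and $R_n(1)=1$ coming from \eqref{eq:forward2}. Solving the resulting $2\times2$ linear system and differentiating at $\eta=1$ produces $R_n'(1)$; the substitution $u := r^{2n+d-2}$ streamlines the algebra and collapses the expression to the stated closed form $\hat\lambda_n = (n+(n+d-2)r^{2n+d-2})/(1-r^{2n+d-2})$. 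The eigenvalues of $\Lambda_{0,r}-\Lambda_1$ are then $\hat\lambda_n - n$, and a short simplification (again via $u$) gives $\lambda_n = (2n+d-2)/(r^{2-d-2n}-1)$.

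The main obstacle is not conceptual but lies in the careful handling of the degenerate case $d=2$, $n=0$: there the repeated characteristic root forces the logarithmic fundamental solution $R_0 = A + B\log\eta$, and imposing $R_0(r)=0$, $R_0(1)=1$ yields $R_0'(1) = -1/\log(r)$; since the corresponding $\Lambda_1$-eigenvalue vanishes, this value is shared by $\Lambda_{0,r}$ and $\Lambda_{0,r}-\Lambda_1$, matching the second branch of each piecewise formula. Apart from this special case, the remaining work is the routine bookkeeping of the linear solve and the algebraic reduction to the advertised closed forms, so I would organize the write-up so that the generic power-law computation is done once and the degenerate logarithmic computation is appended as a parallel, self-contained subcase.
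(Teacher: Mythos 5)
Your proposal is correct and follows essentially the same route as the paper: both reduce to the Cauchy--Euler equation \eqref{eq:eulerode}, obtain the indicial roots $n$ and $2-d-n$, impose regularity at the origin for $\Lambda_1$ and the conditions $R_n(r)=0$, $R_n(1)=1$ for $\Lambda_{0,r}$, read off $R_n'(1)$, and treat the degenerate logarithmic case $d=2$, $n=0$ separately. The multiplicity and eigenspace claims are, as you note, inherited directly from the spherical-harmonic decomposition, exactly as in the paper's proof.
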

\begin{proof}
	Note that \eqref{eq:eulerode} is a second order Cauchy--Euler equation. Its indicial equation 
	\begin{equation*}
	m_n^2 + (d-2)m_n + \tilde{\lambda}_n = 0
	\end{equation*}
	has the solutions
	\begin{equation*}
	m_n = \begin{cases}
	n  & =: m_n^+, \\
	2-d-n  & =: m_n^-.
	\end{cases}
	\end{equation*}
	If $d > 2$, then $m_n^+ \neq m_n^-$ for all $n\in\rum{N}_{0}$, which also holds for $d=2$ when $n\geq 1$. Hence, all solutions to \eqref{eq:eulerode} are of the form
	\begin{equation*}
	R_n(\eta) = \begin{cases} c_n \eta^{n} + \tilde{c}_n \eta^{2-d-n}, & \quad d>2 \vee n\geq 1, \\[1mm]
	c_0 + \tilde{c}_0\log(\eta), & \quad d = 2 \wedge n=0.
	\end{cases}
	\end{equation*}
	Starting with $\Lambda_1$, we see that the boundary conditions for \eqref{eq:eulerode} are $\limsup_{\eta\to 0^+}\abs{R_n(\eta)} < \infty$ and $R_n(1) = 1$, which immediately imply $\tilde{c}_n = 0$ and $c_n = 1$ for all $n\in\rum{N}_0$. The eigenvalues of $\Lambda_1$ are thus $R_n'(1) = n$ for $n\in\rum{N}_0$, as claimed. 
	
	Now considering $\Lambda_{0,r}$, the boundary conditions for \eqref{eq:eulerode} become $R_n(r) = 0$ and $R_n(1) = 1$. For the special case $d = 2$ and $n=0$, we arrive at $R_0(\eta) = 1-\smash{\frac{\log(\eta)}{\log(r)}}$. For $d>2$ or $n\geq 1$, we obtain
	\begin{equation*}
	R_n(\eta) = \frac{\eta^n}{1-r^{2n+d-2}} + \frac{\eta^{2-d-n}}{1-r^{2-d-2n}}.
	\end{equation*}
	Evaluating $R_n'(1)$ provides the sought-for representation for $\hat{\lambda}_n$. Furthermore, as the eigenfunctions of $\Lambda_{0,r}$ and $\Lambda_1$ coincide, we obtain the representation for $\lambda_n$ by evaluating the difference $R_n'(1)-n$ for $n\in\rum{N}_0$.
\end{proof}

\begin{remark} \label{remark:eigval}
	The eigenvalues $\lambda_n$, $n \in \mathbb{N}_0$, of $\Lambda_{0,r}- \Lambda_1$ given in Proposition~\ref{prop:eigvals} decay strictly in $n$. Indeed, the derivative of the function $y\mapsto \frac{y}{r^{-y}-1}$ reveals that  the claim holds for $n \in \mathbb{N}_0$ if $d \geq 3$ and for $n \in \mathbb{N}$ if $d=2$, because
	\begin{equation*}
	1+\log(r^{2n+d-2}) < r^{2n+d-2} \qquad \text{when} \  2n+d-2 > 0,
	\end{equation*}
	since $r\in (0,1)$. For $d = 2$, we see that $\lambda_0 > \lambda_1$ if and only if $1+\log(r^{-2}) < r^{-2}$, which holds as $r^{-2}>1$. Note that the observed strict decay is in contrast to the case of an inclusion with finite conductivity, where the eigenvalues may exhibit an initial increase in magnitude before decaying with respect to the ordering of the spherical harmonics \cite[Remark~3.2]{Garde_2017}.
\end{remark}

For completeness, the following lemma shows that the limit behavior $\lim_{n\to\infty}\lambda_n = 0$ guarantees that the difference map $\Lambda_{C,R}-\Lambda_1$ extends to a compact self-adjoint operator on $L^2(\partial\brum)$. However, it is actually well known that $\Lambda_{C,R}-\Lambda_1$ is smoothening because $\Lambda_{C,R}$ and $\Lambda_1$ are pseudodifferential operators (modulo a smoothing operator) with the same symbol~\cite{Lee1989}.

\begin{lemma} \label{lemma:DNextend}
	For each ball satisfying $\overline{B(C,R)}\subset \brum$, the operator $\Lambda_{C,R}-\Lambda_1$ continuously extends to a compact self-adjoint operator in $\mathscr{L}(L^2(\partial\brum))$.
\end{lemma}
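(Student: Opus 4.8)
The plan is to transfer the spectral information just established for the concentric geometry to the nonconcentric one via Theorem~\ref{thm:DNkelvin}, and then to invoke a standard spectral-theory criterion for compactness. First I would treat the concentric case $\Lambda_{0,r}-\Lambda_1$ directly. By Proposition~\ref{prop:eigvals}, this operator is diagonalized by the orthonormal basis $\{f_{n,j}\}$ of $L^2(\partial\brum)$, acting as multiplication by the real eigenvalue $\lambda_n$ on each eigenspace $\mathcal{H}_{n,d}$. Since the $\lambda_n$ are real and the eigenspaces $\mathcal{H}_{n,d}$ are mutually orthogonal and together span $L^2(\partial\brum)$, the diagonal operator
\begin{equation*}
  f = \sum_{n=0}^\infty \sum_{j=1}^{\alpha_{n,d}} c_{n,j} f_{n,j} \ \longmapsto \ \sum_{n=0}^\infty \sum_{j=1}^{\alpha_{n,d}} \lambda_n c_{n,j} f_{n,j}
\end{equation*}
is a well-defined bounded self-adjoint operator on $L^2(\partial\brum)$ precisely because $(\lambda_n)$ is a bounded sequence; indeed $\sup_n \abs{\lambda_n} = \lambda_0 < \infty$ by the strict decay noted in Remark~\ref{remark:eigval}. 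This bounded extension agrees with $\Lambda_{0,r}-\Lambda_1$ on the dense subspace of finite spherical-harmonic expansions (equivalently on $C^\infty(\partial\brum)$), so it is \emph{the} continuous extension.

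For compactness of this concentric difference I would use the standard fact that a self-adjoint diagonal operator with eigenvalues $\lambda_n$ of finite multiplicity $\alpha_{n,d}$ is compact if and only if $\lambda_n \to 0$. From Proposition~\ref{prop:eigvals}, for $d>2$ or $n\geq1$ we have $\lambda_n = (2n+d-2)/(r^{2-d-2n}-1)$; since $r\in(0,1)$ the denominator $r^{2-d-2n}-1$ grows exponentially in $n$ while the numerator grows only linearly, so $\lambda_n \to 0$. The single exceptional eigenvalue in the $d=2$, $n=0$ case, $-1/\log r$, is just one finite-multiplicity term and does not affect the limit. Concretely, the finite-rank truncations that keep only $n \leq N$ converge in operator norm to the full operator with rate $\sup_{n>N}\abs{\lambda_n}\to 0$, exhibiting $\Lambda_{0,r}-\Lambda_1$ as a norm limit of finite-rank operators, hence compact; self-adjointness is immediate from the realness of the spectrum on an orthonormal eigenbasis.

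Finally I would push this to the nonconcentric geometry. Theorem~\ref{thm:DNkelvin} gives the factorization
\begin{equation*}
  \Lambda_{C,R}-\Lambda_1 = G_a^2 \Ka (\Lambda_{0,r}-\Lambda_1) \Ka
\end{equation*}
for the $a$ and $r$ associated to $(C,R)$. Each of $\Ka$ and the multiplication operator $G_a^2$ is bounded on $L^2(\partial\brum)$ (by \eqref{eq:bKelvin_bound} and the essential-sup bound in Lemma~\ref{lemma:opnorms}(iii)), so $\Lambda_{C,R}-\Lambda_1$ is a two-sided product of bounded operators with the compact operator $\Lambda_{0,r}-\Lambda_1$, and the ideal property of the compact operators makes it compact as well. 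For self-adjointness I would use Lemma~\ref{lemma:opnorms}(ii), namely $K_a^* = G_a^2\Ka$ in $\mathscr{L}(L^2(\partial\brum))$: writing $T := \Lambda_{0,r}-\Lambda_1$ (self-adjoint), the right factor is $\Ka$ and the left factor is $G_a^2\Ka = \Ka^*$, so the product is of the form $\Ka^* T \Ka$, which is manifestly self-adjoint since $T$ is. This completes the argument. The main obstacle, modest as it is, lies in being careful that the spectral ``extension'' in the first step is genuinely an extension of the a~priori $H^{1/2}\to H^{-1/2}$ operator, i.e.\ that the operator built from the eigenvalues really coincides with $\Lambda_{0,r}-\Lambda_1$; this is resolved by checking agreement on the dense set $C^\infty(\partial\brum)$ of finite spherical-harmonic sums and invoking continuity, after which everything else is routine operator-algebra bookkeeping.
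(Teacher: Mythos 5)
Your proposal is correct and follows the same route as the paper: a spectral argument for the concentric case (bounded eigenvalues tending to zero, orthonormal eigenbasis of spherical harmonics, symmetry) followed by transferring compactness and self-adjointness to the nonconcentric geometry via the factorization of Theorem~\ref{thm:DNkelvin} and Lemma~\ref{lemma:opnorms}. Your explicit observation that the factorization has the form $\Ka^* T \Ka$ with $T = \Lambda_{0,r}-\Lambda_1$ self-adjoint is exactly the mechanism the paper leaves implicit when it says the difference map ``inherits'' these properties.
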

\begin{proof}
	The proof is completely analogous to \cite[Lemma 3.3]{Garde_2017} that should be consulted for further details. 
	
	For $r\in(0,1)$, the eigenvalues $\{\lambda_n\}_{n\in\rum{N}_0}$ of $\Lambda_{0,r}-\Lambda_1$ are bounded and satisfy $\lim_{n\to\infty}\lambda_n = 0$. Combined with the corresponding eigenfunctions $\{f_{n,j}\}$ forming an orthonormal basis for $L^2(\partial\brum)$ and $\Lambda_{0,r}-\Lambda_1$ being symmetric in the $L^2(\partial\brum)$-inner product, it follows that $\Lambda_{0,r}-\Lambda_1$ continuously extends to a compact self-adjoint operator in $\mathscr{L}(L^2(\partial\brum))$. 
	
	For any $C\in\brum\setminus\{0\}$ and $R \in (0,1-\abs{C})$, we may choose $a\in\brum\setminus\{0\}$ such that $I_a(B(C,R)) = B(0,r)$ for some $r\in(0,1)$. Hence, Theorem~\ref{thm:DNkelvin} and Lemma~\ref{lemma:opnorms} imply that $\Lambda_{C,R}-\Lambda_1$ inherits compactness and self-adjointness from $\Lambda_{0,r}-\Lambda_1$.
\end{proof}

\section{Depth-dependent norm bounds} \label{sec:bounds}

In this section we can finally present some depth-dependent norm estimates. However, it is convenient to first introduce certain weighted $L^2$-spaces on $\partial \brum$.
\begin{definition} \label{def:weightednorms}
	For $a\in\brum\setminus\{0\}$ and $s\in\rum{R}$, we denote by $L^2_{a,s}(\partial\brum)$ the weighted $L^2(\partial\brum)$-space equipped with the inner product and norm 
	\begin{equation*}
	\inner{f,g}_{a,s} := \inner{G_a^s f,G_a^s g}_{L^2(\partial\brum)} \qquad {\rm and} \qquad \norm{f}_{a,s} := \sqrt{\inner{f,f}_{a,s}}, \qquad f,g \in L^2(\partial\brum),
	\end{equation*}
respectively. Furthermore, we denote  by $\norm{\,\cdot\,}_{a,s,t}$ the operator norm of $\mathscr{L}(L^2_{a,s}(\partial\brum),L^2_{a,t}(\partial\brum))$ for $s,t\in\rum{R}$.
\end{definition}
As $\as \notin \brum$, the function $g_a^s$ is bounded away from zero and infinity on $\partial\brum$. Hence, it is obvious that the topologies of $L^2(\partial\brum)$ and $L^2_{a,s}(\partial\brum)$ are the same for any $s \in \R$.
However, using these newly defined weighted norms, one obtains useful relations between the norms of concentric and nonconcentric DN maps.
\begin{theorem} \label{thm:norminvariant} Let $B(C,R) = I_a(B(0,r))$ for $a\in\brum\setminus\{0\}$ and $r\in(0,1)$. For any $s,t\in\rum{R}$,
	\begin{align*}
	  \norm{\Lambda_{C,R}-\Lambda_1}_{a,s,t} &= \norm{\Lambda_{0,r}-\Lambda_1}_{a,1-s,-1-t},
        \end{align*}
        or equivalently,
        \begin{align*} 
	\norm{\Lambda_{0,r}-\Lambda_1}_{a,s,t} &= \norm{\Lambda_{C,R}-\Lambda_1}_{a,1-s,-1-t}.
	\end{align*}
\end{theorem}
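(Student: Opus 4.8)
The plan is to exploit the algebraic relation between the two difference operators furnished by Theorem~\ref{thm:DNkelvin}, namely $\Lambda_{C,R}-\Lambda_1 = G_a^2 \Ka (\Lambda_{0,r}-\Lambda_1) \Ka$, and to track how the Kelvin transformation $\Ka$ and the multiplier $G_a$ act on the weighted spaces $L^2_{a,s}(\partial\brum)$. The essential observation is that, because the weight in the inner product of $L^2_{a,s}$ is precisely a power of $g_a$, the operator $\Ka$ and the multiplication operators $G_a^k$ interact with these weights in a controlled way. Concretely, I would first record how the adjoint of $\Ka$ looks between weighted spaces: from Lemma~\ref{lemma:opnorms}(ii) we have $\Ka^* = G_a^2 \Ka$ in $\mathscr{L}(L^2(\partial\brum))$, and this should be upgraded to a clean statement about $\Ka$ mapping $L^2_{a,s}$ to $L^2_{a,t}$. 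The key computation is that $\inner{G_a^{t} \Ka f, G_a^t \Ka g}_{L^2} = \inner{G_a^t \Ka f, G_a^t \Ka g}_{L^2}$, and using the commutation relations \eqref{eq:KGcommut} together with the isometry $\Ga \Ka$ of Lemma~\ref{lemma:opnorms}(i), one finds that $\Ka : L^2_{a,s}(\partial\brum) \to L^2_{a,1-s}(\partial\brum)$ is an isometry.

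Granting that isometry property, the remainder is a direct unwinding of operator norms. I would write, for $f \in L^2_{a,s}$,
\begin{equation*}
\norm{(\Lambda_{C,R}-\Lambda_1)f}_{a,t} = \norm{G_a^2 \Ka (\Lambda_{0,r}-\Lambda_1)\Ka f}_{a,t},
\end{equation*}
and then move the factor $G_a^2$ and the Kelvin transformation across, converting each into a shift of the weight index. Pulling $G_a^2$ into the weight turns $\norm{G_a^2 \, \cdot \,}_{a,t}$ into $\norm{\,\cdot\,}_{a,t+2}$, and applying the isometry $\Ka : L^2_{a,t+2} \to L^2_{a,1-(t+2)} = L^2_{a,-1-t}$ reduces the outer norm to the $L^2_{a,-1-t}$-norm of $(\Lambda_{0,r}-\Lambda_1)\Ka f$. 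On the input side, the isometry $\Ka : L^2_{a,s} \to L^2_{a,1-s}$ converts $\norm{f}_{a,s}$ into $\norm{\Ka f}_{a,1-s}$, so taking the supremum over $f$ is the same as taking the supremum over $h := \Ka f$ ranging over $L^2_{a,1-s}$. Assembling these gives exactly $\norm{\Lambda_{0,r}-\Lambda_1}_{a,1-s,-1-t}$, which is the asserted identity.

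The main obstacle, and the step deserving the most care, is establishing that $\Ka$ is an isometry from $L^2_{a,s}$ onto $L^2_{a,1-s}$ with the correct index shift $s \mapsto 1-s$ — getting this exponent arithmetic right is where sign and power bookkeeping can easily go astray. The verification rests on the chain
\begin{equation*}
\norm{\Ka f}_{a,1-s}^2 = \norm{G_a^{1-s}\Ka f}_{L^2}^2 = \norm{\Ga \Ka G_a^{-s} f}_{L^2}^2 = \norm{G_a^{-s} f}_{L^2}^2,
\end{equation*}
where the middle equality uses the commutation identity $G_a^{1-s}\Ka = \Ga \Ka G_a^{s-2}$... so I would double-check via \eqref{eq:KGcommut} precisely which power of $G_a$ migrates through $\Ka$, and the final equality invokes that $\Ga \Ka$ is an $L^2(\partial\brum)$-isometry from Lemma~\ref{lemma:opnorms}(i). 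Since each power of $G_a$ commutes with $\Ka$ only at the cost of flipping its sign, the net effect is the reflection $s \mapsto 1-s$ for inputs and $t \mapsto -1-t$ for outputs (the extra $-2$ relative to inputs coming from the explicit $G_a^2$ factor in the DN relation). Finally, the ``equivalently'' second identity follows immediately by swapping the roles of $B(C,R)$ and $B(0,r)$, which is legitimate because $I_a$ is an involution interchanging the two geometries, so no separate argument is required.
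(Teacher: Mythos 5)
Your architecture is exactly the paper's own proof in lightly repackaged form: the paper establishes the identity by unwinding $\norm{G_a^2\Ka(\Lambda_{0,r}-\Lambda_1)\Ka}_{a,s,t}$ as a supremum, using Theorem~\ref{thm:DNkelvin}, the commutation identities \eqref{eq:KGcommut}, the $L^2(\partial\brum)$-isometry $\Ga\Ka$ from Lemma~\ref{lemma:opnorms}(i), and the substitution $f\mapsto \Ka f$ justified by $\Ka$ being an involution. You use precisely these ingredients, merely isolating the weighted-space statement ``$\Ka$ is an isometry of $L^2_{a,s}(\partial\brum)$ onto $L^2_{a,1-s}(\partial\brum)$'' as an intermediate lemma; the subsequent index bookkeeping ($t\mapsto t+2\mapsto -1-t$ on the output side, $s\mapsto 1-s$ on the input side) is correct and matches the paper's computation step for step. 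Your treatment of the ``equivalently'' clause — swapping the two geometries via the second identity of Theorem~\ref{thm:DNkelvin} — is also legitimate; the paper instead substitutes $(s,t)\mapsto(1-s,-1-t)$ and uses that this substitution is involutive, but both are one-liners.

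The one place where your proof does not close as written is the verification of the key lemma itself, and you are right to flag it: your exponent bookkeeping there is wrong, in two mutually inconsistent ways. From \eqref{eq:KGcommut} one has $G_a^{k}\Ka = \Ka G_a^{-k}$, hence $G_a^{1-s}\Ka = \Ga\Ka G_a^{s}$ — not $\Ga\Ka G_a^{-s}$ (your displayed chain) and not $\Ga\Ka G_a^{s-2}$ (your inline formula). The correct chain reads
\begin{equation*}
\norm{\Ka f}_{a,1-s} = \norm{G_a^{1-s}\Ka f}_{L^2(\partial\brum)} = \norm{\Ga\Ka G_a^{s} f}_{L^2(\partial\brum)} = \norm{G_a^{s} f}_{L^2(\partial\brum)} = \norm{f}_{a,s}.
\end{equation*}
As typed, your chain terminates at $\norm{G_a^{-s}f}_{L^2(\partial\brum)} = \norm{f}_{a,-s}$, which asserts the index shift $u\mapsto 1+u$ rather than $s\mapsto 1-s$; propagated consistently, it would yield $\norm{\Lambda_{C,R}-\Lambda_1}_{a,s,t} = \norm{\Lambda_{0,r}-\Lambda_1}_{a,s-1,t+1}$, which is not the claimed identity and disagrees with it for generic $s,t$. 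Since the rest of your argument invokes the correct lemma statement, the proof is repaired by the single line above — but the repair is essential, not cosmetic. (Minor: your ``key computation'' $\inner{G_a^{t}\Ka f, G_a^{t}\Ka g}_{L^2(\partial\brum)} = \inner{G_a^{t}\Ka f, G_a^{t}\Ka g}_{L^2(\partial\brum)}$ is a tautology as written and carries no content; presumably a typo for the identity just given.)
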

\begin{proof}
	The result follows from a direct calculation utilizing Theorem~\ref{thm:DNkelvin}, \eqref{eq:KGcommut}, Lemma~\ref{lemma:opnorms}(i), and $\Ka$ being an involution on $L^2(\partial\brum)$. To be more precise,
	\begin{align*}
	\norm{\Lambda_{C,R}-\Lambda_1}_{a,s,t} &= \norm{G_a^2\Ka(\Lambda_{0,r}-\Lambda_1)\Ka}_{a,s,t} \\
	&= \sup_{f\in L^2(\partial\brum)\setminus\{0\}} \frac{\norm{G_a^tG_a^2\Ka(\Lambda_{0,r}-\Lambda_1)\Ka f}_{L^2(\partial\brum)}}{\norm{G_a^s f}_{L^2(\partial\brum)}} \\
 	&= \sup_{f\in L^2(\partial\brum)\setminus\{0\}} \frac{\norm{G_a\Ka G_a^{-1-t}(\Lambda_{0,r}-\Lambda_1)f}_{L^2(\partial\brum)}}{\norm{G_a^s \Ka f}_{L^2(\partial\brum)}} \\       
	&= \sup_{f\in L^2(\partial\brum)\setminus\{0\}} \frac{\norm{G_a^{-1-t}(\Lambda_{0,r}-\Lambda_1)f}_{L^2(\partial\brum)}}{\norm{G_a \Ka G_a^{1-s}f}_{L^2(\partial\brum)}} \\
	&= \sup_{f\in L^2(\partial\brum)\setminus\{0\}} \frac{\norm{G_a^{-1-t}(\Lambda_{0,r}-\Lambda_1)f}_{L^2(\partial\brum)}}{\norm{G_a^{1-s}f}_{L^2(\partial\brum)}} \\[1mm]
	&= \norm{\Lambda_{0,r}-\Lambda_1}_{a,1-s,-1-t}.		
	\end{align*}
This also proves the second part of the claim by replacing $s$ and $t$ with $1-s$ and $-1-t$.
\end{proof}

\begin{remark}
Some natural choices in Theorem~\ref{thm:norminvariant} are $s = 1/2$ and $t = -1/2$ or $s = 1$ and $t = -1$, which result in
\begin{align*}
\norm{\Lambda_{C,R}-\Lambda_1}_{a,1/2,-1/2} &= \norm{\Lambda_{0,r}-\Lambda_1}_{a,1/2,-1/2}, \\[1mm]
\norm{\Lambda_{C,R}-\Lambda_1}_{a,1,-1} &= \norm{\Lambda_{0,r}-\Lambda_1}_{\mathscr{L}(L^2(\partial\brum))}, \\[1mm]
\norm{\Lambda_{0,r}-\Lambda_1}_{a,1,-1} &= \norm{\Lambda_{C,R}-\Lambda_1}_{\mathscr{L}(L^2(\partial\brum))}.
\end{align*}
In addition, the choice $s = 1$ and $t=-1$ leads to a natural diagonalization of $\Lambda_{C,R}-\Lambda_1$; see Appendix~\ref{sec:appB} for the precise formulation of this result.
\end{remark}

Finally, it is time to present our optimal depth-dependent distinguishability bounds.
\begin{theorem} \label{thm:L2bnds}
  Assume $B(C,R) = I_a(B(0,r))$ for $a\in\brum\setminus\{0\}$ and $r\in(0,1)$.	 Let $\{\lambda_{n}\}_{n\in\rum{N}_0}$ denote the set of eigenvalues for $\Lambda_{0,r}-\Lambda_1$, cf.~Proposition~\ref{prop:eigvals}. Then,
  \begin{equation}
    \label{eq:depth_bounds}
	\frac{1-\rho}{1+\rho} \leq \frac{\norm{\Lambda_{0,r}-\Lambda_1}_{\mathscr{L}(L^2(\partial\brum))}}{\norm{\Lambda_{C,R}-\Lambda_1}_{\mathscr{L}(L^2(\partial\brum))}} \leq   \left[\frac{(1-\rho^2)^2 d}{(1+\rho^2)^2 d +4\rho^2\frac{\lambda_1}{\lambda_0}\left(\frac{\lambda_1}{\lambda_0}+2\right)}\right]^{1/2} \leq \frac{1-\rho^2}{1+\rho^2}.
	\end{equation}
	Furthermore, these bounds are optimal in the sense that
	\begin{equation}
          \label{eq:depth_bounds_opti}
		\inf_{r\in(0,1)}\frac{\norm{\Lambda_{0,r}-\Lambda_1}_{\mathscr{L}(L^2(\partial\brum))}}{\norm{\Lambda_{C,R}-\Lambda_1}_{\mathscr{L}(L^2(\partial\brum))}} = \frac{1-\rho}{1+\rho}, \qquad \sup_{r\in(0,1)}\frac{\norm{\Lambda_{0,r}-\Lambda_1}_{\mathscr{L}(L^2(\partial\brum))}}{\norm{\Lambda_{C,R}-\Lambda_1}_{\mathscr{L}(L^2(\partial\brum))}} = \frac{1-\rho^2}{1+\rho^2}.
	\end{equation}
\end{theorem}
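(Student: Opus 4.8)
The plan is to reduce both operator norms to quantities governed by the single positive compact self-adjoint operator $A := \Lambda_{0,r}-\Lambda_1$, whose eigenvalues $\lambda_n>0$ are strictly decreasing (Proposition~\ref{prop:eigvals} and Remark~\ref{remark:eigval}); in particular $\norm{\Lambda_{0,r}-\Lambda_1}_{\mathscr{L}(L^2(\partial\brum))}=\lambda_0$. First I would use Theorem~\ref{thm:norminvariant} with $s=1$, $t=-1$ and the substitution $f=G_a^{-1}\psi$ to rewrite the nonconcentric norm as
\[ \norm{\Lambda_{C,R}-\Lambda_1}_{\mathscr{L}(L^2(\partial\brum))} = \norm{\Lambda_{0,r}-\Lambda_1}_{a,1,-1} = \norm{G_a^{-1}AG_a^{-1}}_{\mathscr{L}(L^2(\partial\brum))} = \sup_{\psi\neq0}\frac{\inner{A\,G_a^{-1}\psi,\,G_a^{-1}\psi}}{\norm{\psi}_{L^2(\partial\brum)}^2}, \]
the last equality holding because $G_a^{-1}AG_a^{-1}$ is positive and self-adjoint. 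Writing $h=G_a^{-1}\psi$, the ratio in the theorem equals $\lambda_0\big/\bigl(\sup_{h\neq0}\inner{Ah,h}/\norm{G_a h}^2\bigr)$, which is the object to be bounded from both sides.

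The lower bound $\frac{1-\rho}{1+\rho}$ is the easy half: from $\inner{Ah,h}\leq\lambda_0\norm{h}^2$ and $\norm{G_a h}^2\geq\bigl(\inf_{\partial\brum}g_a^2\bigr)\norm{h}^2=\frac{1-\rho}{1+\rho}\norm{h}^2$ (Lemma~\ref{lemma:opnorms}(iii)), the supremum is at most $\lambda_0\frac{1+\rho}{1-\rho}$, so the ratio is at least $\frac{1-\rho}{1+\rho}$.

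The core of the proof is the middle bound. Let $P_n$ be the orthogonal projection of $L^2(\partial\brum)$ onto $\mathcal{H}_{n,d}$. Since every $\lambda_n\geq0$, discarding all but the two leading blocks gives $A\geq\lambda_0 P_0+\lambda_1 P_1$, hence $G_a^{-1}AG_a^{-1}\geq B_0:=G_a^{-1}(\lambda_0 P_0+\lambda_1 P_1)G_a^{-1}\geq0$ and therefore $\norm{\Lambda_{C,R}-\Lambda_1}_{\mathscr{L}(L^2(\partial\brum))}\geq\norm{B_0}$. I would then test the finite-rank operator $B_0$ against $\phi_0:=G_a^{-1}f_{0,1}$, where $f_{0,1}$ is the normalized constant, via $\norm{B_0}\geq\norm{B_0\phi_0}/\norm{\phi_0}$. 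The decisive simplification is that $g_a^{-2}(x)=\abs{x-a}^2/(1-\rho^2)$ is a degree-two polynomial on $\partial\brum$, so, choosing the basis of $\mathcal{H}_{1,d}$ so that $f_{1,1}$ is proportional to $x\mapsto x\cdot e_a$ and setting $\phi_{1,j}:=G_a^{-1}f_{1,j}$, all required moments are elementary:
\[ \norm{\phi_0}^2=\norm{\phi_{1,j}}^2=\frac{1+\rho^2}{1-\rho^2}, \qquad \inner{\phi_0,\phi_{1,j}}=-\frac{2\rho}{\sqrt{d}\,(1-\rho^2)}\,\delta_{j,1}. \]
Expanding $B_0\phi_0=\lambda_0\norm{\phi_0}^2\phi_0+\lambda_1\sum_j\inner{\phi_0,\phi_{1,j}}\phi_{1,j}$ and evaluating $\norm{B_0\phi_0}^2/\norm{\phi_0}^2$ produces, with $\kappa:=\lambda_1/\lambda_0$, exactly $\frac{\lambda_0^2}{(1-\rho^2)^2}\bigl[(1+\rho^2)^2+\tfrac{4\rho^2\kappa(\kappa+2)}{d}\bigr]$; this is precisely $\lambda_0^2$ over the square of the middle expression, so the ratio is at most that middle bound, and the final inequality $\leq\frac{1-\rho^2}{1+\rho^2}$ is immediate because $4\rho^2\kappa(\kappa+2)\geq0$.

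For the optimality statement \eqref{eq:depth_bounds_opti} I would examine the two extreme regimes of $r$. As $r\to0^+$ one has $\lambda_n/\lambda_0\to0$ for every $n\geq1$, so $\kappa\to0$ and the middle bound tends to $\frac{1-\rho^2}{1+\rho^2}$; a matching lower estimate on the ratio follows from $\inner{Ah,h}\leq\lambda_0\bigl[(1-\kappa)\norm{P_0 h}^2+\kappa\norm{h}^2\bigr]$ together with the Cauchy--Schwarz bound $\norm{P_0 h}^2=\abs{\inner{G_a h,G_a^{-1}f_{0,1}}}^2\leq\frac{1+\rho^2}{1-\rho^2}\norm{G_a h}^2$, which forces $\sup_h\inner{Ah,h}/\norm{G_a h}^2\to\lambda_0\frac{1+\rho^2}{1-\rho^2}$ and hence the ratio $\to\frac{1-\rho^2}{1+\rho^2}$; combined with the universal upper bound this identifies the supremum. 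As $r\to1^-$ one checks instead that $\lambda_n/\lambda_0\to1$ for every fixed $n$; testing $\inner{Ah,h}/\norm{G_a h}^2$ against functions $h$ of finite spherical-harmonic degree and using their density then gives $\sup_h\inner{Ah,h}/\norm{G_a h}^2\to\lambda_0\frac{1+\rho}{1-\rho}$, so the ratio attains its infimum $\frac{1-\rho}{1+\rho}$. I expect the main obstacle to be the middle bound: recognizing that conjugating by $G_a^{-1}$, truncating $A$ to its top two eigenspaces, and testing against $G_a^{-1}f_{0,1}$ reproduces the exact closed form, with the factor $\kappa(\kappa+2)=(\kappa+1)^2-1$ emerging from $\norm{B_0\phi_0}^2$. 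A secondary subtlety is that the eigenvalue limits in the optimality argument hold only in the strong operator topology, so they must be applied to fixed finite-degree test functions before the supremum is taken.
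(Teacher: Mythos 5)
Your proof is correct and follows essentially the same route as the paper's: the reduction via Theorem~\ref{thm:norminvariant} with $s=1$, $t=-1$ to $\norm{G_a^{-1}(\Lambda_{0,r}-\Lambda_1)G_a^{-1}}_{\mathscr{L}(L^2(\partial\brum))}$, the same test function $\phi_0 = G_a^{-1}f_{0,1}\propto g_a^{-1}$ exploiting that $g_a^{-2}$ is a combination of degree-zero and degree-one spherical harmonics (your truncation to $B_0$ is harmless since $g_a^{-2}f_{0,1}\in\mathcal{H}_{0,d}\oplus\mathcal{H}_{1,d}$, whence $B_0\phi_0 = G_a^{-1}(\Lambda_{0,r}-\Lambda_1)G_a^{-1}\phi_0$ and your moment computation reproduces the paper's middle bound exactly), and the limits $r\to0^+$ and $r\to1^-$ of the eigenvalue ratios $\lambda_n/\lambda_0$ for the optimality claims. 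The only cosmetic deviations are your sandwich $A\leq\lambda_0\bigl[(1-\kappa)P_0+\kappa\,\ident\bigr]$ with Cauchy--Schwarz in place of the paper's exact rank-one norm $\norm{G_a^{-1}PG_a^{-1}}_{\mathscr{L}(L^2(\partial\brum))}=c_0$, and your density-of-finite-degree test functions in place of the paper's monotone strong convergence $T_r\to\ident-P$; both are equivalent, and you correctly flag the strong-operator-topology subtlety.
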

\begin{proof}
	Since $\ga$ is bounded from below and above by positive constants on $\partial\brum$, it follows that $G_a^{-1}(L^2(\partial\brum)\setminus\{0\}) = L^2(\partial\brum)\setminus\{0\}$. Hence, applying Theorem~\ref{thm:norminvariant} with $s=1$ and $t=-1$ yields
	\begin{align}
	\norm{\Lambda_{C,R}-\Lambda_1}_{\mathscr{L}(L^2(\partial\brum))} &= \norm{\Lambda_{0,r}-\Lambda_1}_{a,1,-1} \notag\\
	&= \sup_{f\in L^2(\partial\brum)\setminus\{0\}}\frac{\norm{G_a^{-1}(\Lambda_{0,r}-\Lambda_1)f}_{L^2(\partial\brum)}}{\norm{G_a f}_{L^2(\partial\brum)}} \notag\\
	&= \sup_{f\in L^2(\partial\brum)\setminus\{0\}}\frac{\norm{G_a^{-1}(\Lambda_{0,r}-\Lambda_1)G_a^{-1}f}_{L^2(\partial\brum)}}{\norm{f}_{L^2(\partial\brum)}} \notag\\
	&= \norm{G_a^{-1}(\Lambda_{0,r}-\Lambda_1)G_a^{-1}}_{\mathscr{L}(L^2(\partial\brum))}. \label{eq:normequality}
	\end{align}
	In consequence, we immediately obtain the lower bound
	\begin{equation*}
	\frac{\norm{\Lambda_{0,r}-\Lambda_1}_{\mathscr{L}(L^2(\partial\brum))}}{\norm{\Lambda_{C,R}-\Lambda_1}_{\mathscr{L}(L^2(\partial\brum))}}	\geq \norm{\Ga^{-1}}_{\mathscr{L}(L^2(\partial\brum))}^{-2} = \frac{1-\rho}{1+\rho},
	\end{equation*}
    where the equality follows from Lemma~\ref{lemma:opnorms}(iii). We postpone proving the optimality of this estimate till the end of this proof. 

    Let us then consider the upper bound in \eqref{eq:depth_bounds}. If $x\in \partial\brum$, then 
	\begin{equation} \label{eq:ga_m2}
		g_a^{-2}(x) = c_0 + c_1 f_1(x),
	\end{equation}
	with $c_0 := b^{-2}(1+\rho^{-2}) = (1+\rho^2)/(1-\rho^2)$, $c_1 := -2b^{-2}=-2\rho^2/(1-\rho^2)$, and $f_1(x) := x\cdot \as$. In particular, $g_a^{-2}$ is a weighted sum of spherical harmonics of degree zero and one. Hence, we deduce
	\begin{align}
	\norm{G_a^{-1}(\Lambda_{0,r}-\Lambda_1)G_a^{-1}}_{\mathscr{L}(L^2(\partial\brum))} &= \sup_{f\in L^2(\partial\brum)\setminus\{0\}} \frac{\norm{G_a^{-1}(\Lambda_{0,r}-\Lambda_1)G_a^{-1} f}_{L^2(\partial\brum)}}{\norm{f}_{L^2(\partial\brum)}} \notag\\
	&\geq \frac{\norm{g_a^{-1}(\Lambda_{0,r}-\Lambda_1)g_a^{-2} }_{L^2(\partial\brum)}}{\norm{g_a^{-1}}_{L^2(\partial\brum)}} \notag\\
	&= \frac{\norm{(c_0+c_1 f_1)^{1/2}(c_0\lambda_0 + c_1\lambda_1 f_1) }_{L^2(\partial\brum)}}{\norm{g_a^{-1}}_{L^2(\partial\brum)}}.  \label{eq:bnd1}
	\end{align}
	By symmetry, or using  integration formulas for polynomials on the unit sphere \cite{Folland_2001}, we have
	\begin{equation*}
	\norm{g_a^{-1}}_{L^2(\partial\brum)}^2 = \int_{\partial\brum} (c_0+c_1 x\cdot \as) \, \di S(x) = c_0 \abs{\partial\brum}. %\label{eq:gainvnorm}
	\end{equation*}
    To also simplify the numerator on the right-hand side of \eqref{eq:bnd1}, we write
	\begin{align}
	\norm{(c_0+c_1 f_1)^{1/2}(c_0\lambda_0 + c_1\lambda_1 f_1) }_{L^2(\partial\brum)}^2 &= \int_{\partial\brum} (c_0+c_1 x\cdot\as)(c_0\lambda_0+c_1\lambda_1 x\cdot\as)^2 \, \di S(x) \notag \\
	&= c_0^3\lambda_0^2\abs{\partial\brum} + c_0c_1^2\lambda_1(\lambda_1+2\lambda_0) \int_{\partial\brum} (x\cdot\as)^2 \,\di S(x) \notag \\
	&= c_0^3\lambda_0^2\abs{\partial\brum} + c_0c_1^2\rho^{-2}\lambda_1(\lambda_1+2\lambda_0) \int_{\partial\brum} x_1^2 \,\di S(x) \notag \\
	&= c_0^3\lambda_0^2\abs{\partial\brum} + c_0c_1^2\rho^{-2}\lambda_1(\lambda_1+2\lambda_0)\abs{\partial\brum}d^{-1}, \label{eq:otherbndnorm}
	\end{align}
    where the odd powers of $x \cdot \as$ under the integral vanish due to symmetry, and the third equality follows by renaming the line spanned by $a$ as the first coordinate axis. However, evaluating the integral of $x_1^2$ over $\partial \brum$ to arrive at \eqref{eq:otherbndnorm} requires some extra calculations:

    We recall first a few facts about the gamma function, namely $\Gamma(m+1)=m!$ for $m\in\rum{N}_0$, $\abs{\partial\brum} = 2\pi^{d/2}/\Gamma(\frac{d}{2})$, $\Gamma(\frac{1}{2}) = \pi^{1/2}$, $\Gamma(\frac{3}{2}) = \frac{1}{2}\pi^{1/2}$, and Legendre's duplication formula for $z\in\rum{C}$ with $\redel(z)>0$,
	\begin{equation}
	\Gamma(z+\tfrac{1}{2}) = 2^{1-2z}\pi^{1/2}\frac{\Gamma(2z)}{\Gamma(z)}. \label{eq:legendreformula}
	\end{equation}
	Applying \eqref{eq:legendreformula} twice, we get
	\begin{equation}
	\frac{\Gamma(z)}{2 \Gamma(z+1)} = \frac{\Gamma(2z)}{\Gamma(2z+1)}. \label{eq:legendaformula2}
	\end{equation}
	Using the appropriate formula from \cite{Folland_2001} together with \eqref{eq:legendaformula2} finally gives
	\begin{equation*}
	\int_{\partial\brum} x_1^2\,\di S(x) = \frac{2\Gamma(\tfrac{3}{2})\Gamma(\tfrac{1}{2})^{d-1}}{\Gamma(\tfrac{d}{2}+1)} = \frac{\pi^{d/2}}{\Gamma(\tfrac{d}{2}+1)} = \abs{\partial\brum}\frac{\Gamma(\tfrac{d}{2})}{2\Gamma(\tfrac{d}{2}+1)} = \abs{\partial\brum} \frac{\Gamma(d)}{\Gamma(d+1)} = \abs{\partial\brum}d^{-1},
	\end{equation*}
        which completes the proof of \eqref{eq:otherbndnorm}.
        
	Now we are finally ready to derive the upper bound in \eqref{eq:depth_bounds}. Since $\lambda_0 = \norm{\Lambda_{0,r}-\Lambda_1}_{\mathscr{L}(L^2(\partial\brum))}$ by Remark~\ref{remark:eigval}, the formulas \eqref{eq:normequality}--\eqref{eq:otherbndnorm} yield
	\begin{align*}
	\frac{\norm{\Lambda_{0,r}-\Lambda_1}_{\mathscr{L}(L^2(\partial\brum))}^2}{\norm{\Lambda_{C,R}-\Lambda_1}_{\mathscr{L}(L^2(\partial\brum))}^2}	&\leq \frac{\norm{g_a^{-1}}_{L^2(\partial\brum)}^2\lambda_0^2}{\norm{(c_0+c_1 f_1)^{1/2}(c_0\lambda_0 + c_1\lambda_1 f_1) }_{L^2(\partial\brum)}^2} \\
	&= \frac{\lambda_0^2}{c_0^2\lambda_0^2 + c_1^2\rho^{-2}\lambda_1(\lambda_1+2\lambda_0)d^{-1}} \\
	&= \frac{d}{c_0^2 d +c_1^2\rho^{-2}\frac{\lambda_1}{\lambda_0}\left(\frac{\lambda_1}{\lambda_0}+2\right)}.
	\end{align*}
	Taking the square root and inserting $c_0 = (1+\rho^2)/(1-\rho^2)$ and $c_1 =-2\rho^2/(1-\rho^2)$, we finally arrive at
	\begin{equation*}
	\frac{\norm{\Lambda_{0,r}-\Lambda_1}_{\mathscr{L}(L^2(\partial\brum))}}{\norm{\Lambda_{C,R}-\Lambda_1}_{\mathscr{L}(L^2(\partial\brum))}}	\leq \left[\frac{(1-\rho^2)^2 d}{(1+\rho^2)^2 d +4\rho^2\frac{\lambda_1}{\lambda_0}\left(\frac{\lambda_1}{\lambda_0}+2\right)}\right]^{1/2} \leq \frac{1-\rho^2}{1+\rho^2}.
	\end{equation*}
	
	What remains to be shown is that the derived bounds are optimal in the sense of \eqref{eq:depth_bounds_opti}; in fact, we will demonstrate that the upper bound is reached when $r \to 0^+$ and the lower bound at the opposite extreme $r \to 1^-$. To begin with, note that the ratio $\lambda_n/\lambda_0$, $n\geq 1$, can be written as a function of  $r\in(0,1)$ and $d \in \mathbb{N} \setminus \{ 1 \}$ as
	\begin{equation*}
	\frac{\lambda_n}{\lambda_0} = \begin{dcases}
	\frac{-2nr^{2n}\log(r)}{1-r^{2n}}, &  \quad d = 2, \\
	\frac{(2n+d-2)r^{2n}(1-r^{d-2})}{(d-2)(1-r^{2n+d-2})}, & \quad d \geq 3.
	\end{dcases} 
	\end{equation*}
	In particular, $\lambda_n/\lambda_0$ is an increasing function of $r$ with $\lim_{r\to 0^+} \lambda_n/\lambda_0 = 0$ and $\lim_{r\to 1^-}\lambda_n/\lambda_0 = 1$ for $n\geq 1$. The spectral decomposition of $\Lambda_{0,r}-\Lambda_1$ thus reveals
	\begin{equation*}
	\frac{\Lambda_{0,r}-\Lambda_1}{\norm{\Lambda_{0,r}-\Lambda_1}_{\mathscr{L}(L^2(\partial\brum))}} = P + T_r,
	\end{equation*}
	where $P$ is the orthogonal projection in $\mathscr{L}(L^2(\partial\brum))$ onto constant functions, and $T_r$ is a positive semi-definite operator with $\norm{T_r}_{\mathscr{L}(L^2(\partial\brum))} = \frac{\lambda_1}{\lambda_0} = o(r)$. To be more precise,
	\begin{equation*}
	T_r = \sum_{n=1}^\infty\sum_{j=1}^{\alpha_{n,d}} \frac{\lambda_n}{\lambda_0}\inner{\,\cdot\,,f_{n,j}}_{L^2(\partial\brum)}f_{n,j}.
	\end{equation*}
	Moreover, both $G_a^{-1}P G_a^{-1}$ and $G_a^{-1}T_rG_a^{-1}$ are positive semi-definite, and clearly it also holds $\norm{G_a^{-1}T_rG_a^{-1}}_{\mathscr{L}(L^2(\partial\brum))} = o(r)$. Hence, \eqref{eq:normequality} leads to
	\begin{align*}
	\sup_{r\in(0,1)}\frac{\norm{\Lambda_{0,r}-\Lambda_1}_{\mathscr{L}(L^2(\partial\brum))}}{\norm{\Lambda_{C,R}-\Lambda_1}_{\mathscr{L}(L^2(\partial\brum))}} \hspace{-3cm}&\\
	&= \sup_{r\in(0,1)} \norm{G_a^{-1}PG_a^{-1} + G_a^{-1}T_rG_a^{-1}}_{\mathscr{L}(L^2(\partial\brum))}^{-1} \\
	&= \sup_{r\in(0,1)} \inf_{\norm{f}_{L^2(\partial\brum)}= 1} \left( \inner{G_a^{-1}PG_a^{-1}f,f}_{L^2(\partial\brum)} + \inner{G_a^{-1}T_rG_a^{-1}f,f}_{L^2(\partial\brum)} \right)^{-1} \\
	&= \inf_{\norm{f}_{L^2(\partial\brum)}= 1}  \inner{G_a^{-1}PG_a^{-1}f,f}_{L^2(\partial\brum)}^{-1} \\
	&= \norm{G_a^{-1}PG_a^{-1}}_{\mathscr{L}(L^2(\partial\brum))}^{-1}.
	\end{align*}
	Because of \eqref{eq:ga_m2}, $G_a^{-1}PG_a^{-1}g_a^{-1} = c_0g_a^{-1}$,~i.e.,~$c_0$ is the only nonzero eigenvalue of the self-adjoint rank one operator $G_a^{-1}PG_a^{-1}$. In particular, $c_0$ equals the operator norm of $G_a^{-1}PG_a^{-1}$, which gives
	\begin{equation*}
	\sup_{r\in(0,1)}\frac{\norm{\Lambda_{0,r}-\Lambda_1}_{\mathscr{L}(L^2(\partial\brum))}}{\norm{\Lambda_{C,R}-\Lambda_1}_{\mathscr{L}(L^2(\partial\brum))}} = c_0^{-1} = \frac{1-\rho^2}{1+\rho^2},
	\end{equation*}
	proving one half of \eqref{eq:depth_bounds_opti}.
        
	The second half of \eqref{eq:depth_bounds_opti} follows from a similar line of reasoning. Since $\frac{\lambda_n}{\lambda_0} \nearrow 1$ when $r\to 1^-$, it follows that $T_r \to \ident - P$ in the strong operator topology as $r\to 1^-$ by virtue of dominated convergence. Moreover, $\inner{T_r f,f}_{L^2(\partial\brum)}$ is obviously nondecreasing with respect to $r\in(0,1)$ for each $f\in L^2(\partial\brum)$. This gives
	\begin{align*}
	\inf_{r\in(0,1)}\frac{\norm{\Lambda_{0,r}-\Lambda_1}_{\mathscr{L}(L^2(\partial\brum))}}{\norm{\Lambda_{C,R}-\Lambda_1}_{\mathscr{L}(L^2(\partial\brum))}} \hspace{-3cm}&\\
	&= \inf_{r\in(0,1)} \inf_{\norm{f}_{L^2(\partial\brum)}= 1} \left( \inner{G_a^{-1}PG_a^{-1}f,f}_{L^2(\partial\brum)} + \inner{G_a^{-1}T_rG_a^{-1}f,f}_{L^2(\partial\brum)} \right)^{-1} \\
	&= \inf_{r\in(0,1)} \inf_{\norm{f}_{L^2(\partial\brum)}= 1} \left( \inner{PG_a^{-1}f,G_a^{-1}f}_{L^2(\partial\brum)} + \inner{T_rG_a^{-1}f,G_a^{-1}f}_{L^2(\partial\brum)} \right)^{-1} \\
	&= \inf_{\norm{f}_{L^2(\partial\brum)}= 1} \left( \inner{PG_a^{-1}f,G_a^{-1}f}_{L^2(\partial\brum)} + \inner{(\ident-P)G_a^{-1}f,G_a^{-1}f}_{L^2(\partial\brum)} \right)^{-1} \\
	&= \inf_{\norm{f}_{L^2(\partial\brum)}= 1} \inner{G_a^{-2}f,f}_{L^2(\partial\brum)}^{-1} = \norm{G_a^{-1}}_{\mathscr{L}(L^2(\partial\brum))}^{-2} = \frac{1-\rho}{1+\rho},
	\end{align*}
	where the last equality follows from Lemma~\ref{lemma:opnorms}(iii). This completes the proof.
\end{proof}

The $r$-dependent upper bound in \eqref{eq:depth_bounds} satisfies 
\begin{equation*}
C_d(\rho) := \left(\frac{(1-\rho^2)^2 d}{(1+\rho^2)^2 d +12\rho^2}\right)^{1/2} \leq \left(\frac{(1-\rho^2)^2 d}{(1+\rho^2)^2 d +4\rho^2\frac{\lambda_1}{\lambda_0}\left(\frac{\lambda_1}{\lambda_0}+2\right)}\right)^{1/2},
\end{equation*} 
where $C_d(\rho)$ annotates the \emph{least upper bound}, attained when $r\to 1^-$, for a given dimension $d$. It is evident that $C_d(\rho)\to (1-\rho^2)/(1+\rho^2)$ as $d\to \infty$,~i.e.,~the effect that $r \in (0,1)$ has on the $r$-dependent upper bound diminishes as the dimension grows. Figure~\ref{fig:boundplot} compares the least upper bound $C_d(\rho)$ with the $r$-independent lower and upper bounds from \eqref{eq:depth_bounds} for $d=2, \dots, 15$. 

\begin{figure}[htb]
	\centering
	\includegraphics[width=.6\textwidth]{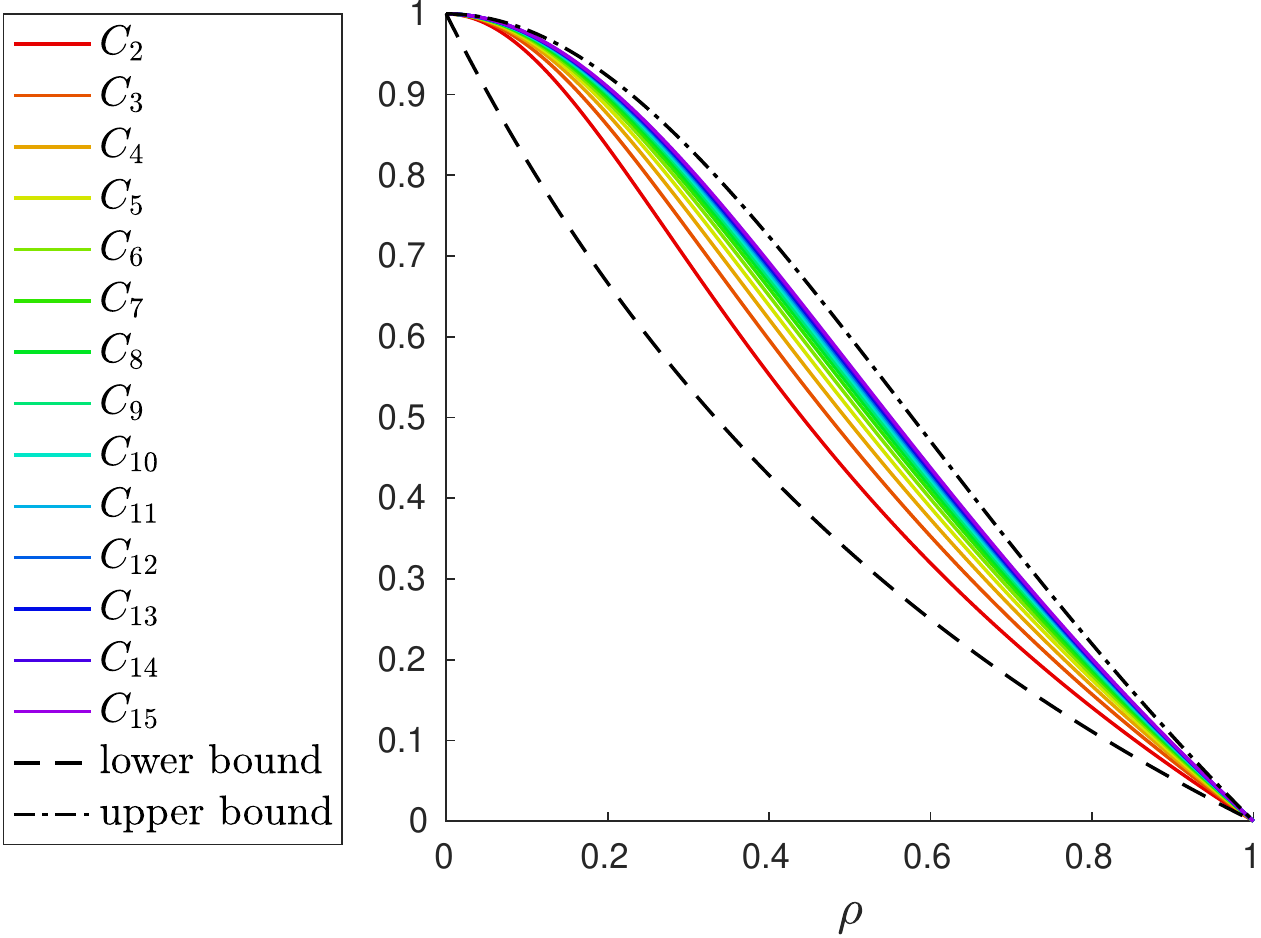}
	\caption{The lower bound $\tfrac{1-\rho}{1+\rho}$ and the upper bound $\tfrac{1-\rho^2}{1+\rho^2}$ from \eqref{eq:depth_bounds} compared with the least upper bound $C_d(\rho)$ for $d=2,\dots,15$ as functions of $\rho \in (0,1)$.} \label{fig:boundplot}
\end{figure}

\begin{remark}
By comparing \eqref{eq:depth_bounds} to the numerical studies in \cite{Garde_2017}, it is observed that the upper bound $\smash{\frac{1-\rho^2}{1+\rho^2}}$ in Theorem~\ref{thm:L2bnds} also seems to be tight for inclusions of finite conductivity in two spatial dimensions.
\end{remark}

\begin{remark}
	By choosing $f = \ga$ instead of $f = g_a^{-1}$ in \eqref{eq:bnd1}, one may exploit the fact that the first eigenfunction of $\Lambda_{0,r}-\Lambda_1$ is constant to deduce another lower bound for the operator norm of $G_a^{-1}(\Lambda_{0,r}-\Lambda_1)G_a^{-1}$. Using the slice integration formula in \cite[Corollary~A.5]{Axler_2001}, this leads to a different (i.e.~worse) upper bound (cf.~\eqref{eq:depth_bounds})
	\begin{equation}
		\frac{\norm{\ga}_{\mathscr{L}(L^2(\partial\brum))}}{\norm{\ga^{-1}}_{\mathscr{L}(L^2(\partial\brum))}} = \frac{1-\rho^2}{\sqrt{1+\rho^2}}\left(\frac{(d-1)V_{d-1}}{dV_d}\int_{-1}^1 \frac{(1-y^2)^{(d-3)/2}}{1+\rho^2-2\rho y}\,\di y\right)^{1/2}, \label{eq:worsebnd}
	\end{equation}
	where $V_m$ denotes the volume of the unit ball in $\rum{R}^m$ for $m\in\rum{N}$. The integral in \eqref{eq:worsebnd} allows an explicit expression, which can be found by several applications of the binomial theorem, Ruffini's rule, and the connection between gamma and beta functions. The bound \eqref{eq:worsebnd} improves as $d$ increases --- and tends from above towards the upper bound of Theorem~\ref{thm:L2bnds}. Most notably, for $d=2$ it gives $\smash{(\frac{1-\rho^2}{1+\rho^2})^{1/2}}$ which is the (nonoptimal) upper bound found in \cite[Theorem~3.5]{Garde_2017} for two spatial dimensions.
\end{remark}

\subsection*{Acknowledgments} 

This work was supported by the Academy of Finland (decision 312124) and the Aalto Science Institute (AScI).

\appendix
\section{Comparison with M\"obius transformations in two dimensions} \label{sec:appA}

To ease the comparison of our results and techniques with those in \cite{Garde_2017} for the two-dimensional setting, we briefly analyze the relation between the M\"obius transformations and inversions that map the unit disk onto itself. Since $K_a = \ia$ for $d=2$, this should be enough for convincing the reader that the two approaches are essentially the same in two dimensions.

We identify $\brum$ with the unit disk in the complex plane. In particular, $\as = a/\abs{a}^2 = 1/\overline{a}$ for $a \in \brum\setminus\{0\} \subset \mathbb{C}$. All M\"obius transformations that map the unit disk onto itself are of the form 
\begin{equation*}
M_a(x) := \frac{x-a}{\overline{a}x-1} = \frac{\as(x-a)\overline{(x-\as)}}{\abs{x-\as}^2} = (\as - a)\frac{\overline{ \overline{\as}(x-\as) } }{\abs{x-\as}^2} + \as, \qquad a \in \brum\setminus\{0\},
\end{equation*}
up to rotations. We can also rewrite $I_a$ in a similar form,
\begin{equation*}
I_a(x) = (\abs{\as}^2-1)\frac{x-\as}{\abs{x-\as}^2}+\as = (\as - a)\frac{\overline{\as}(x-\as)}{\abs{x-\as}^2} + \as.
\end{equation*}
In particular, for $\rho = \abs{a}$ it holds $I_\rho(x) = \overline{M_\rho(x)}$. On the other hand, if $a = \rho \e^{\I\zeta}$ for some $\zeta\in\rum{R}$, it is easy to verify that
\begin{equation*}
M_{a}(x) = \e^{\I\zeta}M_\rho (\e^{-\I\zeta}x) \qquad \text{and} \qquad
I_a(x) = \e^{\I\zeta} I_\rho(\e^{-\I\zeta}x), \qquad x \in  \brum.
\end{equation*}
These two observations immediately lead to the identity $\e^{-\I\zeta}I_a = \overline{e^{-\I\zeta}M_a}$. Since complex conjugation corresponds to reflection with respect to the real axis, we have proved the identity $I_a = {\rm Ref}_a \circ M_a$, where ${\rm Ref}_a$ denotes the reflection with respect to the line spanned by $a$.

Obviously,
\begin{equation*}
\abs{I_a(x)} = \abs{M_a(x)} = \frac{\abs{x-a}}{\abs{\overline{a}x-1}} = \frac{\abs{x-a}}{\abs{x-\as}}\abs{\as} =: r_{x,a}, \qquad x \in  \brum,
\end{equation*}
and \eqref{eq:eqident} indicates
\begin{equation*}
\abs{I_a(x)-\as} = \frac{b^2}{\abs{x-\as}} =: \tilde{r}_{x,a}, \qquad x \in  \brum.
\end{equation*}
Due to the symmetry of $I_a(x)$ and $M_a(x)$ about the line $\mspan\{a\}=\mspan\{\as\}$, it is easy to geometrically deduce that $M_a(x)$ and $I_a(x)$ are the two intersections of the circles $S(0,r_{x,a})$ and $S(\as,\tilde{r}_{x,a})$.  See Figure~\ref{fig:geofig} for a visualization of this geometric interpretation. In particular, the concept of `depth', characterized through $\rho = \abs{a}$, is equivalent for the two transformations.

\begin{figure}[htb]
	\centering
	\includegraphics[width=.75\textwidth]{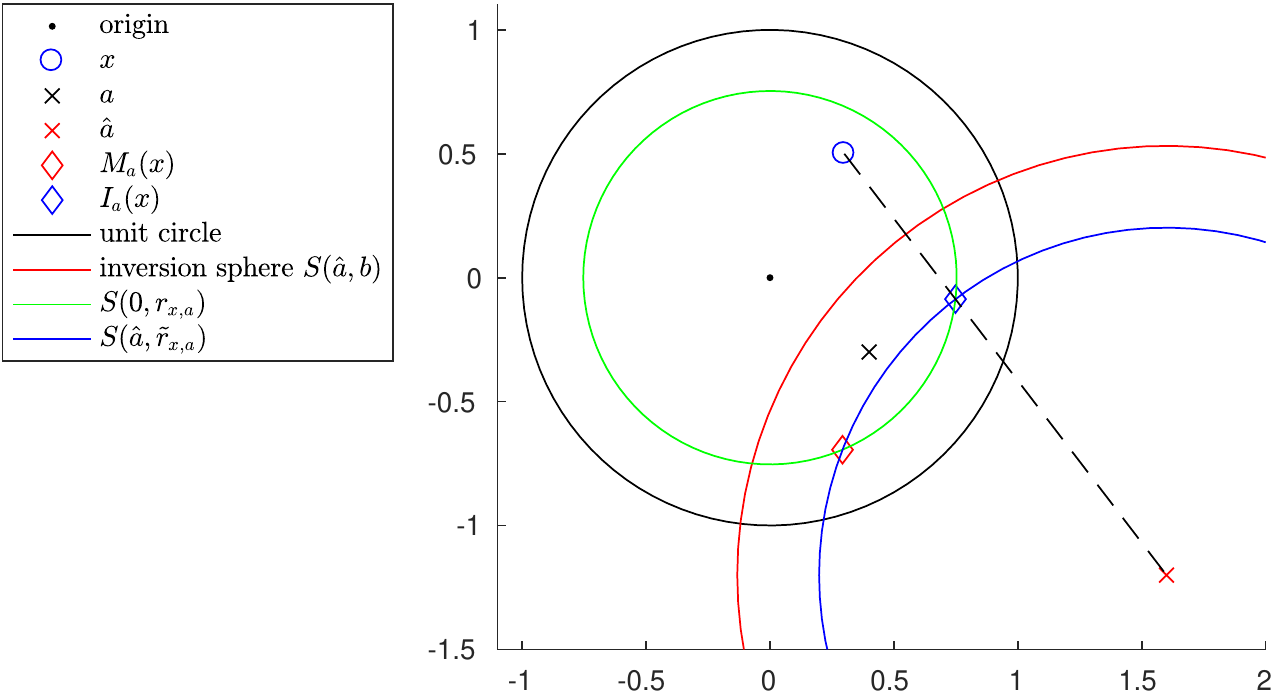}
	\caption{Geometric interpretations of $I_a(x)$ and $M_a(x)$ in two dimensions.} \label{fig:geofig}
\end{figure}

\section{A representation formula for a Kelvin-transformed DN map} \label{sec:appB}

In this appendix, we present a diagonalization of $\Lambda_{C,R}-\Lambda_1$ that we consider interesting in its own right, even though it is not needed when proving the main result of this work. Before proceeding further, we recommend reviewing the summary on spherical harmonics in Section~\ref{sec:EIT} and the definition of the weighted $L^2$-spaces on $\partial \brum$ introduced in Section~\ref{sec:bounds}. 

Observe that $\{\phi_{n,j}\} := \{\Ka f_{n,j}\}$ is an orthonormal basis for $L^2_{a,1}(\partial\brum)$ and $\{\psi_{n,j}\} := \{G_a^2\Ka f_{n,j}\}$ is such for $L^2_{a,-1}(\partial\brum)$ due to Lemma~\ref{lemma:opnorms}(ii) and $K_a$ being an involution. In particular, both of these are (non-orthonormal) bases for the standard space $L^2(\partial \brum)$. This leads to the following representation of $\Lambda_{C,R}-\Lambda_1$.

\begin{proposition} \label{prop:lamdiag}
	Assume $B(C,R) = I_a(B(0,r))$ for $a\in\brum\setminus\{0\}$ and $r\in(0,1)$. Let $\{\lambda_{n}\}_{n\in\rum{N}_0}$ denote the set of eigenvalues for $\Lambda_{0,r}-\Lambda_1$, cf.~Proposition~\ref{prop:eigvals}. Then $\diag(\{\lambda_n\})$, with each $\lambda_n$ repeated according to its multiplicity $\alpha_{n,d}$, is a matrix representation for $\Lambda_{C,R}-\Lambda_1$ with respect to the bases $\{\phi_{n,j}\}$ and $\{\psi_{n,j}\}$ of $L^2(\partial \brum)$, that is,
	\begin{equation*}
	\inner{(\Lambda_{C,R}-\Lambda_1)\phi_{m,j'},\psi_{n,j}}_{a,-1} = \lambda_m\delta_{m,n}\delta_{j',j}. 
	\end{equation*}
	In particular,
	\begin{equation}
	(\Lambda_{C,R}-\Lambda_1)f = \sum_{n=0}^\infty\sum_{j=1}^{\alpha_{n,d}} \lambda_n \inner{f,\psi_{n,j}}_{L^2(\partial\brum)}\psi_{n,j} \label{eq:sumformula}
	\end{equation}
	for any $f\in L^2(\partial\brum)$.
\end{proposition}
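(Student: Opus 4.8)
The plan is to combine the explicit factorization $\Lambda_{C,R}-\Lambda_1 = G_a^2\Ka(\Lambda_{0,r}-\Lambda_1)\Ka$ from Theorem~\ref{thm:DNkelvin} with the fact that $\{f_{n,j}\}$ diagonalizes $\Lambda_{0,r}-\Lambda_1$ (Proposition~\ref{prop:eigvals}). First I would record the biorthogonality of the two bases in the \emph{standard} $L^2$ pairing: since $G_a$ is a real self-adjoint multiplication operator and $G_a\Ka$ is an isometry on $L^2(\partial\brum)$ by Lemma~\ref{lemma:opnorms}(i),
\[
\inner{\phi_{m,j'},\psi_{n,j}}_{L^2(\partial\brum)} = \inner{\Ka f_{m,j'},\, G_a^2\Ka f_{n,j}}_{L^2(\partial\brum)} = \inner{G_a\Ka f_{m,j'},\, G_a\Ka f_{n,j}}_{L^2(\partial\brum)} = \delta_{m,n}\delta_{j',j}.
\]
Equivalently, $\inner{\,\cdot\,,\phi_{m,j'}}_{a,1} = \inner{\,\cdot\,,\psi_{m,j'}}_{L^2(\partial\brum)}$, so $\{\phi_{n,j}\}$ and $\{\psi_{n,j}\}$ form a biorthogonal pair; this is the key bookkeeping identity that links the weighted and unweighted products.

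The heart of the argument is computing the action of the difference map on $\phi_{m,j'}$. Using $\Ka\Ka = \ident$ and $(\Lambda_{0,r}-\Lambda_1)f_{m,j'} = \lambda_m f_{m,j'}$, I obtain
\[
(\Lambda_{C,R}-\Lambda_1)\phi_{m,j'} = G_a^2\Ka(\Lambda_{0,r}-\Lambda_1)\Ka\,\Ka f_{m,j'} = \lambda_m\, G_a^2\Ka f_{m,j'} = \lambda_m\psi_{m,j'}.
\]
Pairing this against $\psi_{n,j}$ in the $L^2_{a,-1}$ inner product and invoking the $L^2_{a,-1}$-orthonormality of $\{\psi_{n,j}\}$ (guaranteed by Lemma~\ref{lemma:opnorms}(i) and $\Ka$ being an involution) yields $\inner{(\Lambda_{C,R}-\Lambda_1)\phi_{m,j'},\psi_{n,j}}_{a,-1} = \lambda_m\delta_{m,n}\delta_{j',j}$, which is the claimed diagonal matrix representation.

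For the series \eqref{eq:sumformula}, I would expand an arbitrary $f\in L^2(\partial\brum)$ in the orthonormal basis $\{\phi_{m,j'}\}$ of $L^2_{a,1}(\partial\brum)$. Since the $L^2_{a,1}$- and $L^2$-topologies coincide (as $g_a$ is bounded away from $0$ and $\infty$ on $\partial\brum$), the expansion $f = \sum_{m,j'}\inner{f,\phi_{m,j'}}_{a,1}\phi_{m,j'}$ converges in $L^2(\partial\brum)$, and the biorthogonality identifies the coefficients as $\inner{f,\phi_{m,j'}}_{a,1} = \inner{f,\psi_{m,j'}}_{L^2(\partial\brum)}$. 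Because $\Lambda_{C,R}-\Lambda_1$ is bounded on $L^2(\partial\brum)$ (Lemma~\ref{lemma:DNextend}), I may apply it term by term and substitute $(\Lambda_{C,R}-\Lambda_1)\phi_{m,j'} = \lambda_m\psi_{m,j'}$, producing exactly \eqref{eq:sumformula} with convergence in $L^2(\partial\brum)$.

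The computations are short and essentially mechanical once the biorthogonality is in hand; I do not expect a genuine obstacle. The only point requiring care is the weighted-norm bookkeeping --- tracking that $\{\phi_{n,j}\}$ is orthonormal in $L^2_{a,1}$ while $\{\psi_{n,j}\}$ is orthonormal in $L^2_{a,-1}$, and confirming that the basis expansion converges in the standard $L^2(\partial\brum)$ topology so that the bounded operator $\Lambda_{C,R}-\Lambda_1$ may legitimately be passed inside the sum.
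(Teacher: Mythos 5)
Your proposal is correct and follows essentially the same route as the paper: both rest on the factorization $\Lambda_{C,R}-\Lambda_1 = G_a^2 K_a(\Lambda_{0,r}-\Lambda_1)K_a$ from Theorem~\ref{thm:DNkelvin}, the isometry/adjoint identities of Lemma~\ref{lemma:opnorms}, the involution property of $K_a$, and the orthonormality of $\{\phi_{n,j}\}$ and $\{\psi_{n,j}\}$ in $L^2_{a,1}(\partial\brum)$ and $L^2_{a,-1}(\partial\brum)$, respectively. Your reorganization --- first proving the operator identity $(\Lambda_{C,R}-\Lambda_1)\phi_{m,j'} = \lambda_m\psi_{m,j'}$ and recording the biorthogonality $\inner{\phi_{m,j'},\psi_{n,j}}_{L^2(\partial\brum)} = \delta_{m,n}\delta_{j',j}$ up front, together with the explicit term-by-term justification via boundedness --- is only a cosmetic variation (if anything, slightly more careful about convergence) of the paper's computation.
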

\begin{proof}
	As $K_a$ is an involution, $(\lambda_m,\phi_{m,j'})$ is an eigenpair of $\Ka (\Lambda_{0,r}-\Lambda_1)\Ka$. Hence, due to Theorem~\ref{thm:DNkelvin} and Lemma~\ref{lemma:opnorms}(ii),
	\begin{align*}
	\inner{(\Lambda_{C,R}-\Lambda_1)\phi_{m,j'},\psi_{n,j}}_{a,-1} &= \inner{G_a\Ka(\Lambda_{0,r}-\Lambda_1)\Ka\phi_{m,j'}, G_a \Ka f_{n,j}}_{L^2(\partial\brum)} \\[1mm]
	&= \lambda_m \inner{G_a^2 \Ka f_{m,j'},\Ka f_{n,j}}_{L^2(\partial\brum)} \\[1mm]
	&= \lambda_m \inner{f_{m,j'},f_{n,j}}_{L^2(\partial\brum)},
	\end{align*}	
	which proves the first part of the claim as $\{\psi_{n,j}\}$ is an orthonormal basis for $L^2_{a,-1}(\partial\brum)$ and $\{f_{n,j}\}$ is such for $L^2(\partial \brum)$.
	
	To show \eqref{eq:sumformula}, we simply write up expansions for $(\Lambda_{C,R}-\Lambda_1)f$ and $f$ in terms of $\{\psi_{n,j}\}$ and $\{\phi_{m,j'}\}$, respectively, and apply the above result,
	\begin{align*}
	(\Lambda_{C,R}-\Lambda_1)f &= \sum_{n=0}^\infty\sum_{j=1}^{\alpha_{n,d}} \inner{(\Lambda_{C,R}-\Lambda_1)f,\psi_{n,j}}_{a,-1}\psi_{n,j} \\
	&= \sum_{n=0}^\infty\sum_{m=0}^\infty \sum_{j=1}^{\alpha_{n,d}}\sum_{j'=1}^{\alpha_{m,d}} \inner{(\Lambda_{C,R}-\Lambda_1)\phi_{m,j'},\psi_{n,j}}_{a,-1}\inner{f,\phi_{m,j'}}_{a,1}\psi_{n,j} \\
	&= \sum_{n=0}^\infty\sum_{j=1}^{\alpha_{n,d}} \lambda_n \inner{f,\phi_{n,j}}_{a,1}\psi_{n,j},
	\end{align*}
	which completes the proof as $\inner{f,\phi_{n,j}}_{a,1} = \inner{f,\psi_{n,j}}_{L^2(\partial\brum)}$.
\end{proof}

\begin{remark}
	It is worth noting that \eqref{eq:sumformula} is not a spectral decomposition of $\Lambda_{C,R}-\Lambda_1$ since $\{\psi_{n,j}\}$ in Proposition~\ref{prop:lamdiag} is not an {\em orthonormal} basis for $L^2(\partial\brum)$. In fact, as seen in the proof, it provides a spectral decomposition of $G_a^{-2}(\Lambda_{C,R}-\Lambda_1)$ with eigenpairs $(\lambda_n,\phi_{n,j})$.
\end{remark}

\bibliographystyle{plain}
\bibliography{minbib}

\begin{thebibliography}{10}

\bibitem{sobolev}
R.~A. Adams and J.~J.~F. Fournier.
\newblock {\em Sobolev spaces}, volume 140 of {\em Pure and Applied Mathematics
  (Amsterdam)}.
\newblock Elsevier/Academic Press, Amsterdam, second edition, 2003.

\bibitem{Alessandrini1988}
G.~Alessandrini.
\newblock Stable determination of conductivity by boundary measurements.
\newblock {\em Appl. Anal.}, 27:153--172, 1988.

\bibitem{Alessandrini2001}
G.~Alessandrini and R.~Gaburro.
\newblock Determining conductivity with special anisotropy by boundary
  measurements.
\newblock {\em SIAM J. Math. Anal.}, 33(1):153--171, 2001.

\bibitem{Alessandrini2009}
G.~Alessandrini and R.~Gaburro.
\newblock The local {C}alder\'on problem and the determination at the boundary
  of the conductivity.
\newblock {\em Comm. PDE}, 34(7-9):918--936, 2009.

\bibitem{Alessandrini_2017}
G.~Alessandrini and A.~Scapin.
\newblock Depth dependent resolution in electrical impedance tomography.
\newblock {\em J. Inverse Ill-Posed Probl.}, 25(3), 2017.

\bibitem{Ammari_2013}
H.~Ammari, J.~Garnier, and K.~S{\o}lna.
\newblock Partial data resolving power of conductivity imaging from boundary
  measurements.
\newblock {\em {SIAM} J. Math. Anal.}, 45(3):1704--1722, 2013.

\bibitem{Ammari2007a}
H.~Ammari and H.~Kang.
\newblock {\em Polarization and Moment Tensors: with Applications to Inverse
  Problems and Effective Medium Theory}, volume 162 of {\em Applied
  Mathematical Sciences}.
\newblock Springer-Verlag, New York, 2007.

\bibitem{Armitage_2001}
D.~H. Armitage and S.~J. Gardiner.
\newblock {\em Classical Potential Theory}.
\newblock Springer London, 2001.

\bibitem{Axler_2001}
S.~Axler, P.~Bourdon, and W.~Ramey.
\newblock {\em Harmonic Function Theory}.
\newblock Springer New York, 2001.

\bibitem{Bogdan_2006}
K.~Bogdan and T.~{\.{Z}}ak.
\newblock On {K}elvin {T}ransformation.
\newblock {\em J. Theor. Probab.}, 19(1):89--120, 2006.

\bibitem{Borman_2009}
D.~Borman, D.~B. Ingham, B.~T. Johansson, and D.~Lesnic.
\newblock The method of fundamental solutions for detection of cavities in
  {EIT}.
\newblock {\em J. Integral Equ. Appl.}, 21(3):383--406, 2009.

\bibitem{Brown2001a}
R.~M. Brown.
\newblock Recovering the conductivity at the boundary from the {D}irichlet to
  {N}eumann map: a pointwise result.
\newblock {\em J. Inverse Ill-Posed Probl.}, 9(6):567--574, 2001.

\bibitem{Cheney1992}
M.~Cheney and D.~Isaacson.
\newblock Distinguishability in impedance imaging.
\newblock {\em {IEEE} T. Bio-Med. Eng.}, 39(8):852--860, 1992.

\bibitem{Dautray1988}
R.~Dautray and J.-L. Lions.
\newblock {\em Mathematical Analysis and Numerical Methods for Science and
  Technology}, volume~2.
\newblock Springer, 1988.

\bibitem{Efthimiou_2014}
C.~Efthimiou and C.~Frye.
\newblock {\em Spherical Harmonics in $p$ Dimensions}.
\newblock World Scientific, 2014.

\bibitem{Erhard2003}
K.~Erhard and R.~Potthast.
\newblock The point source method for reconstructing an inclusion from boundary
  measurements in electrical impedance tomography and acoustic scattering.
\newblock {\em Inverse Problems}, 19:1139--1157, 2003.

\bibitem{Folland_2001}
G.~B. Folland.
\newblock How to integrate a polynomial over a sphere.
\newblock {\em Amer. Math. Monthly}, 108(5):446--448, 2001.

\bibitem{Garde_2017}
H.~Garde and K.~Knudsen.
\newblock Distinguishability revisited: Depth dependent bounds on
  reconstruction quality in electrical impedance tomography.
\newblock {\em {SIAM} J. Appl. Math.}, 77(2):697--720, 2017.

\bibitem{Gurarie}
D.~Gurarie.
\newblock {\em Symmetries and {L}aplacians}, volume 174 of {\em North-Holland
  Mathematics Studies}.
\newblock North-Holland Publishing Co., Amsterdam, 1992.
\newblock Introduction to harmonic analysis, group representations and
  applications.

\bibitem{Hanke_2011}
M.~Hanke, L.~Harhanen, N.~Hyv\"onen, and E.~Schweickert.
\newblock Convex source support in three dimensions.
\newblock {\em {BIT} Numer. Math.}, 52(1):45--63, 2011.

\bibitem{HormanderI}
L.~H\"{o}rmander.
\newblock {\em The analysis of linear partial differential operators {I}}.
\newblock Classics in Mathematics. Springer-Verlag, Berlin, 2003.

\bibitem{Isaacson1986}
D.~Isaacson.
\newblock Distinguishability of conductivities by electric current computed
  tomography.
\newblock {\em {IEEE} T. Med. Imaging}, 5(2):91--95, 1986.

\bibitem{Kang2002}
H.~Kang and K.~Yun.
\newblock Boundary determination of conductivities and {R}iemannian metrics via
  local {D}irichlet-to-{N}eumann operator.
\newblock {\em SIAM J. Math. Anal.}, 34:719--735, 2002.

\bibitem{Kelvin}
W.~Thomson~(Lord Kelvin).
\newblock Extraits de deux lettres adress\'ees \`a {M}. {L}iouville.
\newblock {\em J. Math. Pures Appl.}, 12:256--264, 1847.

\bibitem{Kress_2011}
R.~Kress.
\newblock Conformal mapping and impedance tomography.
\newblock {\em J. Phys.: Conf. Ser.}, 290:012009, 2011.

\bibitem{Kress_2012}
R.~Kress.
\newblock Inverse problems and conformal mapping.
\newblock {\em Complex Var. Elliptic}, 57(2-4):301--316, 2012.

\bibitem{Lee1989}
J.~Lee and G.~Uhlmann.
\newblock Determining anisotropic real-analytic conductivities by boundary
  measurements.
\newblock {\em Comm. Pure Appl. Math.}, 42(2):1097--1112, 1989.

\bibitem{Lions1972}
J.-L. Lions and E.~Magenes.
\newblock {\em Non-homogeneous boundary value problems and applications, Vol.
  1}.
\newblock Springer--Verlag, New York--Heidelberg, 1972.

\bibitem{Mandache2001}
N.~Mandache.
\newblock Exponential instability in an inverse problem for the {S}chr\"odinger
  equation.
\newblock {\em Inverse Problems}, 17(5):1435--1444, 2001.

\bibitem{Michalik_2012}
K.~Michalik and M.~Ryznar.
\newblock Kelvin transform for $\alpha$-harmonic functions in regular domains.
\newblock {\em Demonstratio Math.}, 45(2), 2012.

\bibitem{Nagayasu_2009}
S.~Nagayasu, G.~Uhlmann, and J.-N. Wang.
\newblock A depth-dependent stability estimate in electrical impedance
  tomography.
\newblock {\em Inverse Problems}, 25(7):075001, 2009.

\bibitem{Nakamura2001}
G.~Nakamura and K.~Tanuma.
\newblock Direct determination of the derivatives of conductivity at the
  boundary from the localized {D}irichlet to {N}eumann map.
\newblock {\em Comm. Korean Math. Soc.}, 16:415--425, 2001.

\bibitem{Nakamura2001a}
G.~Nakamura and K.~Tanuma.
\newblock Local determination of conductivity at the boundary from the
  {D}irichlet-to-{N}eumann map.
\newblock {\em Inverse Problems}, 17:405--419, 2001.

\bibitem{Nakamura2003}
G.~Nakamura and K.~Tanuma.
\newblock Formulas for reconstructing conductivity and its normal derivative at
  the boundary from the localized {D}irichlet to {N}eumann map.
\newblock In {\em Recent development in theories \& numerics}, pages 192--201.
  World Sci. Publ., River Edge, NJ, 2003.

\bibitem{Seo_2002}
J.~K. Seo, O.~Kwon, and S.~Kim.
\newblock Location search techniques for a grounded conductor.
\newblock {\em {SIAM} J. Appl. Math.}, 62(4):1383--1393, 2002.

\bibitem{Sylvester1988}
J.~Sylvester and G.~Uhlmann.
\newblock Inverse boundary value problems at the boundary --- continuous
  dependence.
\newblock {\em Comm. Pure Appl. Math.}, 41:197--219, 1988.

\bibitem{Wermer_1974}
J.~Wermer.
\newblock {\em Potential Theory}.
\newblock Springer Berlin Heidelberg, 1974.

\end{thebibliography}

\end{document}